 \newtheorem{thm}{Theorem}[section]
 \newtheorem{lem}[thm]{Lemma}
 \theoremstyle{definition}
 \newtheorem{defn}[thm]{Definition}
 \theoremstyle{remark}
 \newtheorem{rem}[thm]{Remark}
 \numberwithin{equation}{section}
\newcommand{\taul}{\text{\LARGE\(\tau\)}}
\DeclareMathOperator{\tr}{tr}
\DeclareMathOperator{\im}{Im}
\DeclareMathOperator{\Mod}{(mod\, 2)}
\begin{document}

%-------------------------------------------------------------------------
% editorial commands: to be inserted by the editorial office
%
%\firstpage{1} \volume{228} \Copyrightyear{2004} \DOI{003-0001}
%
%
%\seriesextra{Just an add-on}
%\seriesextraline{This is the Concrete Title of this Book\br H.E. R and S.T.C. W, Eds.}
%
% for journals:
%
%\firstpage{1}
%\issuenumber{1}
%\Volumeandyear{1 (2004)}
%\Copyrightyear{2004}
%\DOI{003-xxxx-y}
%\Signet
%\commby{inhouse}
%\submitted{March 14, 2003}
%\received{March 16, 2000}
%\revised{June 1, 2000}
%\accepted{July 22, 2000}

\title[On a family of Laurent polynomials]
 {On a family of Laurent polynomials\\ generated by \(\boldsymbol{2\times2}\) matrices}
\author[V. Katsnelson]{Victor Katsnelson}
\address{%
Department of Mathematics\\
The Weizmann Institute\\
76100, Rehovot\\
Israel}
\email{victor.katsnelson@weizmann.ac.il; victorkatsnelson@gmail.com}
%\thanks{This work was completed with the support of our \TeX-pert.}
\subjclass{11C08,11C20,26C10,33C47}
\keywords{\(2\times2\)-matrices, Laurent polynomials, Chebyshev polynomials, entire functions with real
\(\pm1\)-points.}
\date{July, 2015}
%%% ----------------------------------------------------------------------
\begin{abstract}
To \(2\times2\) matrix \(G\) with complex entries, the sequence of Laurent polynomial
\(L_n(z,G)=\tr \big(G\big[\begin{smallmatrix}z&0\\ 0&z^{-1}\end{smallmatrix}\big]G^{\ast}\big)^n\) is related. It turns out that for each \(n\), the family \(\big\{L_n(z,G)\big\}_G\), where \(G\) runs over
the set of all \(2\times2\) matrices, is a three-parametric family. A natural parametrization of this
family is found. The polynomial \(L_n(z,G)\) is expressed in terms of these parameters and the Chebyshev polynomial \(T_n\). The zero set of the polynomial \(L_n(z,G)\) is described.
\end{abstract}
\maketitle
\begin{itemize}
\item \(\mathbb{R}\) stands for the set of all real numbers.
\item  \(\mathbb{C}\) stands for the set of all complex numbers.
\item If \(z\in\mathbb{C},\,z=x+iy,\,x,y\in \mathbb{R}\), then \(\overline{z}=x-iy\)
is the complex conjugate number.
\item If \(M= \Big[\begin{smallmatrix}m_{11}&m_{12} \\[0.7ex] m_{21}&m_{22} \end{smallmatrix}\Big]\) is a matrix, then
    \(M^{\ast}=\Big[\begin{smallmatrix}\overline{m_{11}}&\overline{m_{21}} \\[0.7ex]
    \overline{m_{12}}&\overline{m_{22}} \end{smallmatrix}\Big]\)
    is the Hermitian conjugate matrix.
    \item For a matrix \(M= \Big[\begin{smallmatrix}m_{11}&m_{12} \\[0.7ex] m_{21}&m_{22} \end{smallmatrix}\Big]\), \(\tr M\) stands for the trace
        of  \(M\): \(\tr\,M=m_{11}+m_{22}\), \(\det M\) stands for the determinant
        of  \(M\).
\end{itemize}

 \section{Laurent polynomials generated by \(\boldsymbol{2\times2}\) matrices}
 \label{sec1}
 Let
  \begin{equation}
 \label{GM}
 G=
 \begin{bmatrix}
 g_{11}&g_{12}\\
 g_{21}&g_{22}
 \end{bmatrix}
 \end{equation}
be a \(2\times2\) matrix with complex entries. For \(z\in\mathbb{C}\), let us define
\begin{subequations}
\label{DePi}
\begin{gather}
\label{LM}
S(z,G)=G
\begin{bmatrix}
z&0\\ 0&z^{-1}
\end{bmatrix}
G^{\ast}, \\[1.0ex]
\label{DPi}
L_n(z,G)=\tr(S(z,G))^n.
\end{gather}
\end{subequations}
\begin{lem}
\label{let}
Considered as a function of \(z,\,\,L_n(z,G)\) is a Laurent polynomial:
\begin{equation}
\label{CoLp}
L_n(z,G)=\sum\limits_{-n\leq k\leq n}c_{k,n}(G)z^k.
\end{equation}
The "leading" coefficients \(c_{\pm n,n}(G)\) are:
\begin{equation}
\label{CoLpl}
c_{-n,n}(G)=(|g_{12}|^2+|g_{22}|^2)^n,\quad c_{n,n}(G)=(|g_{11}|^2+|g_{21}|^2)^n.
\end{equation}
\end{lem}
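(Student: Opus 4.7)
The first step is to compute \(S(z,G)\) explicitly. Writing \(\big[\begin{smallmatrix}z&0\\ 0&z^{-1}\end{smallmatrix}\big]=z\,e_1e_1^{\ast}+z^{-1}e_2e_2^{\ast}\) and denoting by \(u=\big[\begin{smallmatrix}g_{11}\\ g_{21}\end{smallmatrix}\big]\) and \(v=\big[\begin{smallmatrix}g_{12}\\ g_{22}\end{smallmatrix}\big]\) the two columns of \(G\), a direct multiplication gives the rank-one decomposition
\[
S(z,G)=z\,uu^{\ast}+z^{-1}vv^{\ast}.
\]

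From here the \(n\)-fold non-commutative product expands as
\[
S(z,G)^n=\sum_{(\varepsilon_1,\ldots,\varepsilon_n)\in\{+1,-1\}^n}z^{\varepsilon_1+\cdots+\varepsilon_n}\,M_{\varepsilon_1}M_{\varepsilon_2}\cdots M_{\varepsilon_n},
\]
with \(M_{+1}:=uu^{\ast}\) and \(M_{-1}:=vv^{\ast}\). Since each exponent \(\varepsilon_1+\cdots+\varepsilon_n\) lies in \([-n,n]\), taking traces term by term proves that \(L_n(z,G)\) has the Laurent form \eqref{CoLp}.

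For the leading coefficient \(c_{n,n}(G)\), observe that the only multi-index producing the monomial \(z^n\) is \((+1,\ldots,+1)\); hence the coefficient of \(z^n\) in the matrix \(S(z,G)^n\) equals \((uu^{\ast})^n\). Since \(uu^{\ast}\) has rank one and \(u^{\ast}u=|g_{11}|^2+|g_{21}|^2\), cyclicity of the trace yields
\[
\tr\bigl((uu^{\ast})^n\bigr)=(u^{\ast}u)^{n-1}\tr(uu^{\ast})=(u^{\ast}u)^n=(|g_{11}|^2+|g_{21}|^2)^n.
\]
The formula for \(c_{-n,n}(G)\) is obtained from the multi-index \((-1,\ldots,-1)\) by the same computation with \(v\) in place of \(u\).

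I do not anticipate any real obstacle: the whole argument hinges on recognising that \(S(z,G)\) is a sum of two rank-one terms weighted by \(z\) and \(z^{-1}\), which simultaneously bounds the Laurent degree by \(n\) and reduces the extreme traces to one-line scalar computations via the identity \(\tr(uu^{\ast})=u^{\ast}u\).
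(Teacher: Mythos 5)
Your proposal is correct and follows essentially the same route as the paper: the decomposition \(S(z,G)=z\,uu^{\ast}+z^{-1}vv^{\ast}\) is exactly the paper's \(P_1(G)z+P_{-1}(G)z^{-1}\), and the leading coefficients are obtained in both cases from the rank-one identity \(\tr\big((uu^{\ast})^n\big)=(u^{\ast}u)^n\) (the paper computes \((P_{\pm1})^n\) explicitly where you invoke cyclicity of the trace, a cosmetic difference). No gaps.
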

\begin{proof}
The \(2\times2\) matrix function \(S(z,G)\) can be presented as a linear combination:
\begin{equation}
S(z,G)=P_1(G)z+P_{-1}(G)z^{-1},
\end{equation}
where
\begin{equation}
P_1(G)=
\begin{bmatrix}
g_{11}\\
g_{21}
\end{bmatrix}
\begin{bmatrix}
\overline{g_{11}}&\overline{g_{21}}
\end{bmatrix},\quad
P_{-1}(G)=
\begin{bmatrix}
g_{12}\\
g_{22}
\end{bmatrix}
\begin{bmatrix}
\overline{g_{12}}&\overline{g_{22}}
\end{bmatrix}.
\end{equation}
Since \((S(z,G))^n=(P_{1}(G)z+P_{-1}(G)z^{-1})^n\), it is clear that
\begin{equation}
(S(z,G))^n=(P_1(G))^nz^n+\,\cdots\,+(P_{-1}(G))^nz^{-n}.
\end{equation}
The matrices \((P_1(G))^n,\,(P_{-1}(G))^n\) and their traces can be calculated easily:
\begin{gather}
\notag
(P_1(G))^n=\begin{bmatrix}
g_{11}\\
g_{21}
\end{bmatrix}
(|g_{11}|^2+|g_{21}|^2)^{n-1}
\begin{bmatrix}
\overline{g_{11}}&\overline{g_{21}}
\end{bmatrix},\\
\notag
(P_{-1}(G))^n=
\begin{bmatrix}
g_{12}\\
g_{22}
\end{bmatrix}
(|g_{12}|^2+|g_{22}|^2)^{n-1}
\begin{bmatrix}
\overline{g_{12}}&\overline{g_{22}}
\end{bmatrix},\\
\label{ETr}
\tr (P_1(G))^n=(|g_{11}|^2+|g_{21}|^2)^{n},\ \
\tr (P_{-1}(G))^n=(|g_{12}|^2+|g_{22}|^2)^{n}.
\end{gather}
Since the value \(\tr M\) is linear with respect to \(2\times2\) matrix \(M\),
the equality \eqref{CoLpl} follows from the definition \eqref{DPi} and from \eqref{ETr}.
\end{proof}
Let us "normalize" the polynom1al \(L_n(z,G)\).

\begin{defn}
\label{Gen}
We say that the matrix \(G=\big[\begin{smallmatrix}g_{11}&g_{12}\\ g_{21}&g_{22}\end{smallmatrix}\big]\) is \emph{generic} if the condition
\begin{equation}
(|g_{11}|^2+|g_{21}|^2)(|g_{12}|^2+|g_{22}|^2)\not=0
\end{equation}
holds.
\end{defn}
      For a generic matrix \(G=\begin{bmatrix}
 g_{11}&g_{12}\\
 g_{21}&g_{22}
 \end{bmatrix}\), \eqref{GM}, let us define
 \begin{gather}
 \label{Rs}
 R_1(G)=(|g_{11}|^2+|g_{21}|^2)^{\frac{1}{2}},\quad R_{2}(G)=(|g_{12}|^2+|g_{22}|^2)^{\frac{1}{2}},\\
 H=\begin{bmatrix}
 h_{11}&h_{12}\\
 h_{21}&h_{22}
 \end{bmatrix}=\begin{bmatrix}
 g_{11}&g_{12}\\
 g_{21}&g_{22}
 \end{bmatrix}\begin{bmatrix}(R_1(G))^{-1}&0\\ 0&(R_{2}(G))^{-1}\end{bmatrix}
 \label{HM}
 \end{gather}
 The matrix \(H\) satisfies the normalizing condition
 \begin{equation}
 \label{NoH}
 |h_{11}|^2+|h_{21}|^2=1, \ |h_{12}|^2+|h_{21}|^2=1.
 \end{equation}
 \begin{lem}
 \label{Rel}
 The Laurent polynomials \(L_n(\,.\,,H)\) and \(L_n(\,.\,,G)\) are related by the equality
 \begin{equation}
 \label{RelP}
 L_n(z,G)=R^{n}\,L_n(\rho\,z,H),
 \end{equation}
 where
 \begin{equation}
 \label{NoRa}
 R=R_{1}(G)R_2(G), \ \ \rho=R_1(G)/R_2(G),
 \end{equation}
   \(R_1(G),R_2(G)\) are defined by \eqref{Rs}.
 \end{lem}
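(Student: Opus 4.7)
The plan is a direct computation that exploits the multiplicative nature of the definition $S(z,G)=G\operatorname{diag}(z,z^{-1})G^{\ast}$.

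First I would invert the normalization \eqref{HM} to write $G = H\,D$ where $D=\operatorname{diag}(R_1(G),R_2(G))$. Since $R_1(G),R_2(G)$ are positive reals, we also have $G^{\ast}=D\,H^{\ast}$. Substituting into \eqref{LM} gives
\[
S(z,G)=H\,D\,\operatorname{diag}(z,z^{-1})\,D\,H^{\ast}
=H\operatorname{diag}\!\bigl(R_1^2 z,\,R_2^2 z^{-1}\bigr)H^{\ast},
\]
because diagonal matrices commute. The next step is to pull a common scalar factor out of the inner diagonal matrix: writing $R_1^2 z = (R_1R_2)(R_1/R_2)z = R\cdot \rho z$ and $R_2^2 z^{-1} = R\cdot(\rho z)^{-1}$, the inner matrix becomes $R\operatorname{diag}(\rho z,(\rho z)^{-1})$. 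Therefore
\[
S(z,G)=R\,\cdot\,H\operatorname{diag}\!\bigl(\rho z,(\rho z)^{-1}\bigr)H^{\ast}=R\,S(\rho z,H).
\]

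Finally I would raise this identity to the $n$-th power, using that $R$ is a scalar so $(R\,S(\rho z,H))^{n}=R^{n}(S(\rho z,H))^{n}$, and then apply the linearity of the trace:
\[
L_n(z,G)=\tr\bigl(S(z,G)\bigr)^{n}=R^{n}\tr\bigl(S(\rho z,H)\bigr)^{n}=R^{n}L_n(\rho z,H),
\]
which is exactly \eqref{RelP}.

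There is no real obstacle here; the only thing one has to be careful about is the order of the matrix factors and the reality/positivity of $R_1(G),R_2(G)$, which is what allows $G^\ast = D H^\ast$ without introducing complex conjugates of the scalars. Once that observation is made, the lemma reduces to commuting diagonal matrices and factoring out a scalar, and the passage from $S$ to $L_n$ is immediate by linearity of the trace.
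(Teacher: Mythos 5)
Your proof is correct, and it is essentially the computation the paper itself intends: Lemma \ref{Rel} is stated without proof, but the identical argument (writing \(G\) as a normalized matrix times a positive diagonal factor, absorbing the diagonal into \(\operatorname{diag}(z,z^{-1})\) to get \(S(z,G)=R\,S(\rho z,\cdot)\), then raising to the \(n\)-th power and taking the trace) appears verbatim in the paper's proof of statement 2 of Theorem \ref{Cor1}. Nothing is missing.
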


\begin{lem}
\label{Mo}
Let \(H\) be an \emph{arbitrary} \(2\times2\) matrix with complex entries and \(F\) be the nonnegative square root of the matrix
\(H^{\ast}H\):
\begin{equation}
\label{SqR}
F^2=H^{\ast}H,\ \ F\geq0.
\end{equation}
Then the Laurent polynomials \(L_n(z,H)\) and \(L_n(z,F)\) coincides:
\begin{equation}
\label{CoP}
L_n(z,H)\equiv L_n(z,F).
\end{equation}
\end{lem}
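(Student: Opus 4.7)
The plan is to reduce the lemma to the standard polar decomposition of a $2\times2$ matrix. Write $H=UF$, where $U$ is unitary and $F=(H^{\ast}H)^{1/2}\ge 0$ is the nonnegative square root appearing in the statement. (In the generic, invertible case this is the classical polar decomposition; for singular $H$ the partial isometry can always be extended to a full unitary, so such a factorization exists for every $2\times2$ matrix.) Then $H^{\ast}=F^{\ast}U^{\ast}=FU^{\ast}$, since $F$ is Hermitian.

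Setting $D(z)=\bigl[\begin{smallmatrix}z&0\\ 0&z^{-1}\end{smallmatrix}\bigr]$, I would substitute into \eqref{LM} to get
\begin{equation*}
S(z,H)=H\,D(z)\,H^{\ast}=U\bigl(F\,D(z)\,F\bigr)U^{\ast}=U\,S(z,F)\,U^{\ast}.
\end{equation*}
Because $U^{\ast}U=I$, the $n$-th power telescopes:
\begin{equation*}
\bigl(S(z,H)\bigr)^{n}=U\bigl(S(z,F)\bigr)^{n}U^{\ast}.
\end{equation*}

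The final step is to apply the trace and use its cyclic invariance, $\tr(UAU^{\ast})=\tr(U^{\ast}UA)=\tr A$, which gives
\begin{equation*}
L_n(z,H)=\tr\bigl(S(z,H)\bigr)^{n}=\tr\bigl(S(z,F)\bigr)^{n}=L_n(z,F),
\end{equation*}
as desired.

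The only genuine subtlety is the existence of the unitary factor $U$ when $H$ is not invertible; this is the main (and only) point that requires a moment of care. If one wants to bypass it entirely, one can argue by continuity: invertible matrices are dense in $M_2(\mathbb{C})$, both sides of \eqref{CoP} depend continuously (in fact polynomially) on the entries of $H$ (and $F=(H^{\ast}H)^{1/2}$ depends continuously on $H$), so the identity extends from the invertible case to the general case. Either route settles the lemma with no further computation needed.
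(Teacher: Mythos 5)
Your proof is correct, but it takes a genuinely different route from the paper. You invoke the polar decomposition $H=UF$ with $U$ unitary and then conjugate: $S(z,H)=U\,S(z,F)\,U^{\ast}$, so the traces of all powers agree. The paper never constructs $U$. It instead applies the cyclic invariance of the trace, $\tr(AB)=\tr(BA)$, directly to the product $\tr\bigl(HD(z)H^{\ast}\cdots HD(z)H^{\ast}\bigr)$: first it moves the leading $H$ to the end, producing $\tr\bigl(D(z)H^{\ast}H\cdots D(z)H^{\ast}H\bigr)=\tr\bigl(D(z)F^{2}\cdots D(z)F^{2}\bigr)$, and then it cycles one factor of $F$ back to the front to arrive at $\tr\bigl(FD(z)F\cdots FD(z)F\bigr)=\tr\bigl(S(z,F)\bigr)^{n}$. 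The paper's argument is thus entirely computation-free of any decomposition theorem and sidesteps the one subtlety you rightly flag, namely the existence of the unitary factor when $H$ is singular; your two suggested remedies for that point (extending the partial isometry to a unitary, or a density-and-continuity argument) are both valid, but they are extra machinery that the trace-cycling argument simply does not need. What your approach buys in exchange is a cleaner structural statement -- $S(z,H)$ and $S(z,F)$ are unitarily equivalent as matrix functions, not merely trace-equal in every power -- which is in fact the same mechanism the paper uses later (in the passage from $F_{\theta,a}$ to $F_{\theta}$ via the unitary $U_a$), so your method is perfectly in the spirit of the paper even though it is not the proof given for this particular lemma.
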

\begin{proof} According to the definitions \eqref{LM} and \eqref{DPi},
\begin{equation}
L_n(z,H)=\tr\Big(H\big[\begin{smallmatrix}z&0\\0&z^{-1}\end{smallmatrix}\big]H^{\ast}\cdot
H\big[\begin{smallmatrix}z&0\\0&z^{-1}\end{smallmatrix}\big]H^{\ast}\cdot\,\,
\cdots\,\,\cdot H\big[\begin{smallmatrix}z&0\\0&z^{-1}\end{smallmatrix}\big]H^{\ast}\cdot
H\big[\begin{smallmatrix}z&0\\0&z^{-1}\end{smallmatrix}\big]H^{\ast}\Big)
\end{equation}
Permuting the matrices \(H\) and\\
\(\big[\begin{smallmatrix}z&0\\0&z^{-1}\end{smallmatrix}\big]H^{\ast}\cdot
H\big[\begin{smallmatrix}z&0\\0&z^{-1}\end{smallmatrix}\big]H^{\ast}\cdot
\cdots H\big[\begin{smallmatrix}z&0\\0&z^{-1}\end{smallmatrix}\big]H^{\ast}\cdot
H\big[\begin{smallmatrix}z&0\\0&z^{-1}\end{smallmatrix}\big]H^{\ast}\), we obtain
\begin{equation*}
L_n(z,H)=\tr\Big(\big[\begin{smallmatrix}z&0\\0&z^{-1}\end{smallmatrix}\big]H^{\ast}
H\cdot\big[\begin{smallmatrix}z&0\\0&z^{-1}\end{smallmatrix}\big]H^{\ast}
H\cdot\, \cdots\,\cdot\big[\begin{smallmatrix}z&0\\0&z^{-1}\end{smallmatrix}\big]H^{\ast}
H\cdot\big[\begin{smallmatrix}z&0\\0&z^{-1}\end{smallmatrix}\big]H^{\ast}H\big).
\end{equation*}
Taking into account \eqref{SqR}, we obtain
\begin{equation*}
L_n(z,H)=\tr\Big(\big[\begin{smallmatrix}z&0\\0&z^{-1}\end{smallmatrix}\big]F^2
\cdot\big[\begin{smallmatrix}z&0\\0&z^{-1}\end{smallmatrix}\big]F^2
\cdot\, \cdots\,\cdot\big[\begin{smallmatrix}z&0\\0&z^{-1}\end{smallmatrix}\big]F^2
\cdot\big[\begin{smallmatrix}z&0\\0&z^{-1}\end{smallmatrix}\big]F\cdot F\big).
\end{equation*}
Permuting the matrices
\(\big[\begin{smallmatrix}z&0\\0&z^{-1}\end{smallmatrix}\big]F^2
\cdot\big[\begin{smallmatrix}z&0\\0&z^{-1}\end{smallmatrix}\big]F^2
\cdot\, \cdots\,\cdot\big[\begin{smallmatrix}z&0\\0&z^{-1}\end{smallmatrix}\big]F^2
\cdot\big[\begin{smallmatrix}z&0\\0&z^{-1}\end{smallmatrix}\big]F\) and \(F\), we obtain
\begin{equation}
\label{ACo}
L_n(z,H)=\tr\Big(F\big[\begin{smallmatrix}z&0\\0&z^{-1}\end{smallmatrix}\big]F\cdot
F\big[\begin{smallmatrix}z&0\\0&z^{-1}\end{smallmatrix}\big]F\cdot
\, \cdots\,\cdot F\big[\begin{smallmatrix}z&0\\0&z^{-1}\end{smallmatrix}\big]F\cdot
F\big[\begin{smallmatrix}z&0\\0&z^{-1}\end{smallmatrix}\big]F\Big)
\end{equation}
According to the definitions \eqref{LM} and \eqref{DPi}, the function in the right hand side of
\eqref{ACo} is the polynomial \(\Pi_n(z,F)\).
\end{proof}

We apply Lemma \ref{Mo} to the \emph{normalized} matrix \(H\) of the form \eqref{Rs}-\eqref{HM}.
In view of \eqref{NoH}, the matrix \(H^{\ast}H\) is of the form
\begin{equation}
\label{sH}
H^{\ast}H=\begin{bmatrix}1&\gamma\\\overline{\gamma}&1\end{bmatrix}, \ \ \text{where} \ \gamma\in\mathbb{C},\ |\gamma|\leq 1.
\end{equation}
Let \(F=\begin{bmatrix}f_{11}&f_{12}\\ f_{21}&f_{22}\end{bmatrix}\)   be the non-negative square root of the matrix \(H^{\ast}H\).
Since \(F\geq0\), the conditions
\begin{equation}
\label{coF}
 f_{11}\geq 0,\ f_{22}\geq 0, \ f_{12}=\overline{f_{21}}
\end{equation}
hold.
In particular,
\begin{equation}
\label{ed}
|f_{12}|=|f_{21}|.
\end{equation}
From \eqref{sH} and from the equality \(F^2=H^{\ast}H\) it follows that
\begin{equation}
\label{eq1}
(f_{11})^2+|f_{12}|^2=1,\ \  |f_{21}|^2+(f_{22})^2=1.
\end{equation}
Since
\(f_{11}\geq0,\,f_{22}\geq0\), from \eqref{ed} and \eqref{eq1} it follows that
\begin{equation}
\label{emd}
f_{11}=f_{22}.
\end{equation}
From \eqref{eq1}, \eqref{emd} and \(f_{12}=\overline{f_{21}}\) it follows that there exist
\(\theta\in[0,\pi/2]\) and \(a\in\mathbb{C},\,|a|=1\) such that \(f_{11}=f_{22}=\cos\theta\),
\(f_{12}=a\sin\theta,\,f_{21}=\sin\theta\,\overline{a}\). Since \(F\geq 0\), the inequality
\(\det F\geq0\) holds. Therefore actually \(\theta\in[0,\pi/4]\). It is evident that such
\(\theta\) and \(a\) are unique.

Thus the following result is obtained:
\begin{lem}
Let \(H=\begin{bmatrix}
 h_{11}&h_{12}\\
 h_{21}&h_{22}
 \end{bmatrix}\) be an arbitrary \(2\times2\) matrix with complex entries which satisfy the
normalizing condition \eqref{NoH}. Let \(F\) be the non-negative square root of the matrix \(H^{\ast}H\).\\[0.5ex]
Then \(F\) is of the form \(F=F_{\theta,a}\), where
 \begin{equation}
 \label{gf}
 F_{\theta,a}=\begin{bmatrix}\cos\theta&a\sin{\theta}\\ \sin\theta\,\overline{a}&\cos\theta\end{bmatrix},
 \end{equation}
 whith \(\theta\in[0,\pi/4],\,a\in\mathbb{C},\,|a|=1\).
\end{lem}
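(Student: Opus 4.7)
The plan is to read off $F$ directly from its defining equation $F^2=H^*H$ after first rewriting $H^*H$ in a convenient form using the normalization of $H$.

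First I would compute $H^*H$ entry by entry. The normalizing condition \eqref{NoH} gives $(H^*H)_{11}=|h_{11}|^2+|h_{21}|^2=1$ and $(H^*H)_{22}=|h_{12}|^2+|h_{22}|^2=1$, while the off-diagonal entry $\gamma=\overline{h_{11}}h_{12}+\overline{h_{21}}h_{22}$ satisfies $|\gamma|\le 1$ by Cauchy--Schwarz. This yields the representation \eqref{sH}. Next, since $F\ge 0$ I record the immediate consequences $f_{11}\ge 0$, $f_{22}\ge 0$, $f_{12}=\overline{f_{21}}$ in \eqref{coF}, which already give $|f_{12}|=|f_{21}|$.

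Then I would equate $F^2$ with $H^*H$. The $(1,1)$ entry gives $f_{11}^2+|f_{12}|^2=1$ and the $(2,2)$ entry gives $|f_{21}|^2+f_{22}^2=1$, as in \eqref{eq1}. Combined with $|f_{12}|=|f_{21}|$ and $f_{11},f_{22}\ge 0$, this forces $f_{11}=f_{22}$. Writing this common value as $\cos\theta$ with $\theta\in[0,\pi/2]$, the constraint $f_{11}^2+|f_{12}|^2=1$ determines $|f_{12}|=\sin\theta$, and $f_{12}=\overline{f_{21}}$ lets us set $f_{12}=a\sin\theta$, $f_{21}=\overline{a}\sin\theta$ with $|a|=1$ (with $a$ arbitrary on the unit circle when $\theta=0$, but then $F$ does not depend on $a$). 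This gives $F=F_{\theta,a}$ exactly in the form \eqref{gf}.

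The one remaining point — and the only place where the choice of the \emph{non-negative} square root, as opposed to any Hermitian square root, is actually needed — is to restrict $\theta$ to $[0,\pi/4]$. For this I would use $\det F\ge 0$, which holds because $F\ge 0$ has non-negative eigenvalues. Computing $\det F_{\theta,a}=\cos^2\theta-\sin^2\theta=\cos 2\theta$ forces $\cos 2\theta\ge 0$, i.e.\ $\theta\in[0,\pi/4]$. Uniqueness of the pair $(\theta,a)$ in this range is immediate from the formulas $\cos\theta=f_{11}$ and $a\sin\theta=f_{12}$ (with $a$ arbitrary on the unit circle in the degenerate case $\theta=0$). I do not expect any real obstacle: the whole argument is a direct computation, and indeed the paragraph immediately preceding the lemma already carries it out, so the "proof" is essentially a summary of that discussion.
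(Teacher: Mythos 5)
Your proposal is correct and follows essentially the same route as the paper, which proves this lemma in the discussion immediately preceding its statement: computing \(H^{\ast}H\) as in \eqref{sH}, deducing \eqref{coF}--\eqref{emd} from \(F\geq0\) and \(F^2=H^{\ast}H\), parametrizing \(f_{11}=f_{22}=\cos\theta\), \(f_{12}=a\sin\theta\), and using \(\det F\geq0\) to force \(\theta\in[0,\pi/4]\). The only additions you make are minor but welcome clarifications (Cauchy--Schwarz for \(|\gamma|\leq1\), and the remark that \(a\) is arbitrary when \(\theta=0\)).
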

{\ }\\
\noindent
According to Lemma \ref{Mo}, the Laurent polynomials \(L_n(z,H)\) and \(L_n(z,F_{\theta,a})\) coincide:
\begin{equation}
\label{CoPo}
L_n(z,H)=L_n(z,F_{\theta,a}),\ \ n=1,2,3,\,\ldots\,\,.
\end{equation}
Let us relate the matrix \(U_a\) to the number \(a\in\mathbb{C}\):
\begin{equation}
\label{Ua}
U_a=\begin{bmatrix}a&0\\ 0&1\end{bmatrix}.
\end{equation}
If \(|a|=1\), then the matrix \(U_a\) is unitary: \(U_aU_a^{\ast}=U_a^{\ast}U_a=
\big[\begin{smallmatrix}1&0\\ 0&1\end{smallmatrix}\big]\). For \(|a|=1\),
the equalities
\begin{equation}
\label{Ue}
F_{\theta,a}=U_aF_{\theta}U_{a}^{\ast},\ \
U_a\big[\begin{smallmatrix}z&0\\ 0&z^{-1}\end{smallmatrix}\big]U_a^{\ast}=
\big[\begin{smallmatrix}z&0\\ 0&z^{-1}\end{smallmatrix}\big].
\end{equation}
hold, where
\begin{equation}
\label{Fth}
F_{\theta}=\begin{bmatrix}\cos\theta&\sin\theta\\ \sin\theta&\cos\theta\end{bmatrix}.
\end{equation}
Therefore
\begin{equation*}
S(z,F_{\theta,a})=U_aS(z,F_{\theta})U_a^{\ast}, \quad\forall \,a\in\mathbb{C}:\,|a|=1,
\end{equation*}
and for any \(n=1,2,3,\,\ldots\,,\)
\begin{equation}
\label{ure}
(S(z,F_{\theta,a}))^n=U_a(S(z,F_{\theta}))^nU_a^{\ast}, \quad\forall \,a\in\mathbb{C}:\,|a|=1,
\end{equation}
If \(M\) is an arbitrary matrix and \(U\) is an unitary matrix, then
\(\tr UMU^{\ast}=\tr M\). In particular,
 \(\tr U_a(S(z,F_{\theta}))^nU_a^{\ast}=\tr (S(z,F_{\theta}))^n\). Thus
 \begin{equation}
 \label{coip}
 L_n(z,F_{\theta,a})=L_n(z,F_\theta), \quad
 \forall a\in\mathbb{C}:\,|a|=1.
 \end{equation}
 Comparing \eqref{CoPo} and \eqref{coip}, we obtain the following result:
 \begin{thm}
 \label{Rete}
 Let \(H=\begin{bmatrix}
 h_{11}&h_{12}\\
 h_{21}&h_{22}
 \end{bmatrix}\) be an arbitrary \(2\times2\) matrix with complex entries which satisfies the
 normalizing condition \eqref{NoH}. Then there exists an unique \(\theta\in[0,\pi/4]\) such that
 \begin{equation}
 \label{FCc}
 L_n(z,H)=L_n(z,F_{\theta}), \ \  n=1,2,3,\,\ldots \,\,,
 \end{equation}
 where the matrix \(F_{\theta}\) is defined by \eqref{Fth}.
 \end{thm}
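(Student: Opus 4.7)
The existence half is essentially a matter of bookkeeping. Chain the equalities already proved: by Lemma \ref{Mo} together with the immediately preceding lemma, $L_n(z,H) = L_n(z, F_{\theta,a})$ for the pair $(\theta, a)$ produced by the nonnegative square root of $H^{\ast}H$; then \eqref{coip} eliminates the unimodular parameter $a$, yielding $L_n(z, F_{\theta,a}) = L_n(z, F_\theta)$. Concatenating the two identities gives \eqref{FCc}, with the asserted $\theta \in [0, \pi/4]$.

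The substantive point is uniqueness: I must show that distinct $\theta \in [0, \pi/4]$ produce distinct polynomial sequences $\{L_n(\cdot, F_\theta)\}_{n\geq1}$. The case $n=1$ does not suffice, since a direct computation gives $\tr S(z, F_\theta) = z + z^{-1}$ independently of $\theta$, so one must look further. I would use $n=2$. Since $\det F_\theta = \cos^2\theta - \sin^2\theta = \cos(2\theta)$ and the determinant is multiplicative,
\begin{equation*}
\det S(z, F_\theta) = (\det F_\theta)^2 \cdot \det\!\big[\begin{smallmatrix}z&0\\0&z^{-1}\end{smallmatrix}\big] = \cos^2(2\theta),
\end{equation*}
and hence $L_2(z, F_\theta) = (\tr S(z,F_\theta))^2 - 2\det S(z,F_\theta) = (z+z^{-1})^2 - 2\cos^2(2\theta)$. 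An equality $L_2(z, F_\theta) = L_2(z, F_{\theta'})$ therefore forces $\cos^2(2\theta) = \cos^2(2\theta')$; since $\cos$ is strictly decreasing and nonnegative on $[0, \pi/2]$, this yields $\theta = \theta'$.

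No serious obstacle is anticipated. The existence half is pure assembly of previously established lemmas, and uniqueness reduces to the single observation that the constant coefficient of $L_2(z, F_\theta)$ depends strictly monotonically on $\theta$ through $\cos^2(2\theta)$. The only minor pitfall is the temptation to extract $\theta$ from $n = 1$, which fails because that coefficient sequence is independent of $\theta$; passing to $n = 2$ (or, equivalently, inspecting $\det F_\theta$) resolves this immediately.
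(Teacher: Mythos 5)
Your proof is correct. The existence half follows exactly the paper's route: the paper gives no separate proof environment for this theorem, deriving it by concatenating Lemma \ref{Mo}, the normal form $F=F_{\theta,a}$ of the nonnegative square root of $H^{\ast}H$, and the unitary-conjugation identity \eqref{coip}, which is precisely your assembly. Where you genuinely diverge is on uniqueness. The paper at this point only remarks that the $\theta$ in the representation $F=F_{\theta,a}$ is unique, which pins down the \emph{constructed} $\theta$ but does not by itself rule out a different $\theta'$ producing the same polynomial sequence; the injectivity of $\theta\mapsto\{L_n(\cdot,F_\theta)\}$ is only settled later, in statement 3 of Theorem \ref{Cor1}, via the strict monotonicity of $L_n(1,F_\theta)$ in $\theta$, which in turn rests on the coefficient positivity of Theorem \ref{TaP}. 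Your argument instead extracts $\theta$ directly from $n=2$ by the identity $\tr(M^2)=(\tr M)^2-2\det M$, giving $L_2(z,F_\theta)=(z+z^{-1})^2-2\cos^2(2\theta)$ (which one can check agrees with \eqref{EChP} since $T_2(\zeta)=2\zeta^2-1$), and then uses that $\cos(2\theta)$ is nonnegative and strictly decreasing on $[0,\pi/4]$. This is more elementary and self-contained --- it needs nothing from Section \ref{sec2} --- and it also makes explicit the correct observation that $n=1$ cannot work because $L_1(z,F_\theta)=z+z^{-1}$ is $\theta$-independent, a point the paper only records at the very end of the proof of Theorem \ref{Cor1}.
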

 \noindent
 Let as summarize  the above consideration.
 \begin{thm}
 \label{NoF}
 Let \(G=\big[\begin{smallmatrix}g_{11}&g_{12}\\ g_{21}&g_{22}\end{smallmatrix}\big]\) be a \(2\times2\)
 matrix with complex entries. We assume that \(G\) is generic, that in no-one of two columns \(\big[\begin{smallmatrix}g_{11}\\ g_{21}\end{smallmatrix}\big]\) and \(\big[\begin{smallmatrix}g_{12}\\ g_{22}\end{smallmatrix}\big]\)
 vanishes. Let the Laurent polynomial \(L_n(z,G)\) is defined by \eqref{DePi}.\\
Then
\begin{enumerate}
\item
There exists the number \(\theta\in[0,\pi/4]\)
 such that  the Laurent polynomial \(L_n(\,.\,,G)\) generated by the matrix \(G\)
 can be expressed in terms of the Laurent polynomial \(L_n(\,.\,,F_\theta)\)
 generated by the matrix \(F_\theta\):
 \begin{equation}
 \label{NorFo}
 L_n(z,G)\equiv R^nL_n(\rho z,F_\theta),\ \ z\in\mathbb{C},
 \end{equation}
 for every \(n=1,2,3,\,\,\ldots\,\,\),
 where the  matrix \(F_\theta\) is defined by \eqref{Fth}, the numbers \(R\) and \(\rho\) are the same that appears in \eqref{NoRa}.
 \item
  The parameters \(\theta\) is determined
 by the matrix \(G\) uniquely.
 In particular \(\theta\) does not depend on \(n\).
 \item The parameter \(\theta\) takes the value \(\theta=0\) if and only if the columns \(\big[\begin{smallmatrix}g_{11}\\ g_{21}\end{smallmatrix}\big]\) and \(\big[\begin{smallmatrix}g_{12}\\ g_{22}\end{smallmatrix}\big]\) of the matrix \(G\) are orthogonal, that is
 \(g_{11}\overline{g_{12}}+g_{21}\overline{g_{22}}=0\).\\
 The parameter \(\theta\) takes the value \(\theta=\pi/4\) if and only if the columns \(\big[\begin{smallmatrix}g_{11}\\ g_{21}\end{smallmatrix}\big]\) and \(\big[\begin{smallmatrix}g_{12}\\ g_{22}\end{smallmatrix}\big]\) of the matrix \(G\) are proportional, that is the matrix \(G\) is of rank one.
 \end{enumerate}
 \end{thm}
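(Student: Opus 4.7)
The plan is to assemble Theorem \ref{NoF} directly from the tools already in place: Lemma \ref{Rel}, which rescales a generic $G$ to the normalized matrix $H$, and Theorem \ref{Rete}, which further reduces any normalized $H$ to the canonical $F_\theta$. What remains, beyond this assembly, is to give a clean intrinsic formula for the parameter $\theta$ in terms of $G$ and to read off the two boundary cases from it.

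For part (1), I will apply Lemma \ref{Rel} to obtain $L_n(z,G) = R^n L_n(\rho z, H)$ with $R$ and $\rho$ as in \eqref{NoRa}, then apply Theorem \ref{Rete} to the normalized matrix $H$ to get a unique $\theta \in [0,\pi/4]$ with $L_n(\rho z, H) = L_n(\rho z, F_\theta)$. Chaining these gives \eqref{NorFo} for every $n$.

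Part (2) follows because the construction of $\theta$ runs entirely through the matrix $H^{\ast}H$, which depends only on $G$, not on $n$. More concretely, in the argument preceding Theorem \ref{Rete} it is shown that $H^{\ast}H = \bigl[\begin{smallmatrix}1&\gamma\\ \overline{\gamma}&1\end{smallmatrix}\bigr]$ and that the non-negative square root $F$ of $H^\ast H$ has the form $F_{\theta,a}$ with $|\gamma| = \sin 2\theta$; since the equation $F^2 = H^\ast H$ determines $F$ uniquely among non-negative matrices and the map $\theta \mapsto \sin 2\theta$ is a bijection on $[0,\pi/4]$, the value $\theta$ is uniquely pinned down by $H^\ast H$ alone. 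So $\theta$ is an invariant of $G$ independent of $n$.

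For part (3), I will make $\gamma$ explicit in terms of $G$. From \eqref{HM} one computes
\[
\gamma = (H^{\ast}H)_{12} = \frac{\overline{g_{11}}g_{12} + \overline{g_{21}}g_{22}}{R_1(G)R_2(G)},
\]
so $|\gamma| = \sin 2\theta$ becomes an explicit formula for $\theta$. Now $\theta = 0$ is equivalent to $\sin 2\theta = 0$, i.e.\ $\gamma = 0$, i.e.\ $g_{11}\overline{g_{12}} + g_{21}\overline{g_{22}} = 0$ after conjugation—precisely the stated orthogonality of the columns. Similarly, $\theta = \pi/4$ is equivalent to $|\gamma| = 1$, and since the two columns of $H$ are unit vectors this is the Cauchy--Schwarz equality case, which forces those columns to be proportional; scaling back via \eqref{HM} shows the columns of $G$ are proportional, i.e.\ $G$ has rank one. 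I expect no real obstacle: the only point requiring care is keeping track of complex conjugates when writing $\gamma$ in terms of $G$, but once $|\gamma| = \sin 2\theta$ is recorded, the two extremal characterizations are immediate.
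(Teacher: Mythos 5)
Your proposal is correct and follows essentially the same route as the paper, which presents Theorem \ref{NoF} as a summary of Lemma \ref{Rel} combined with Theorem \ref{Rete} and gives no separate written proof. Your explicit computation $\gamma=(\overline{g_{11}}g_{12}+\overline{g_{21}}g_{22})/(R_1(G)R_2(G))$ together with $|\gamma|=\sin 2\theta$ correctly supplies the details of part (3), which the paper leaves implicit.
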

  \section{Properties of the polynomials \(\boldsymbol{L_n(z,F_{\theta})}.\)}
 \label{sec2}
 \begin{thm}
 \label{TaP}
 Let the Laurent polynomial \(L_n(z,F_\theta)\), \(n=1,2,3,\,\,\ldots\,\,\), be defined as
 \begin{subequations}
 \label{Rpt}
 \begin{equation}
 \label{rpt}
 L_n(z,F_\theta)=\tr \,(S(z,F_\theta))^n,
 \end{equation}
 where
 \begin{equation}
 \label{reps}
 S(z,F_\theta)=F_\theta
 \begin{bmatrix}z&0\\0&z^{-1} \end{bmatrix}
 F_\theta,
 \end{equation}
 \end{subequations}
 the matrix \(F_\theta\) is defined by \eqref{Fth}, and \(\theta\in[0,\pi/4]\).

 Then
 \begin{enumerate}
 \item
 The Laurent polynomial \(L_n(z,F_\theta)\) is of the form
 \begin{multline}
 \label{FPi}
\hfill L_n(z,F_\theta)=z^n+z^{-n}+\hspace*{-3.0ex}\sum\limits_{-(n-1)\leq k\leq (n-1)}p_{k,n}(\theta)z^k,\hfill \\  p_{k,n}(\theta)=p_{-k,n}(\theta)\,.
 \end{multline}
\item For \(\theta\in(0,\pi/4)\), the coefficients \(p_{k,n}(\theta)\) vanish if
  \(k\not=n \Mod\) and  are strictly positive if \( k=n \Mod\):
  \begin{subequations}
  \label{StP}
      \begin{align}
      \label{StP1}
       p_{k,n}(\theta)=0,&\quad -(n-1)\leq k \leq n-1,\ k\not=n \Mod,\\
      \label{StP2}
      p_{k,n}(\theta)>0,&\quad -(n-1)\leq k \leq n-1,\ k=n \Mod.
      \end{align}
      \end{subequations}
\item For \(\theta\in[0,\frac{\pi}{4})\), the Laurent polynomial \(L_n(z,F_\theta)\) can be expressed in terms of the Chebyshev polynomial \(T_n,\,T_n(\zeta)=\cos(n\arccos \zeta)\):
\begin{equation}
\label{EChP}
L_n(z,F_\theta)=2(\cos2\theta)^n\cdot T_n\big(\tfrac{z+z^{-1}}{2\cos2\theta}\big).
\end{equation}
\end{enumerate}
\end{thm}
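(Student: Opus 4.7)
The plan is to establish (3) first by a direct $2{\times}2$ linear-algebra computation, deduce (1) and the parity claim in (2) from it, and handle the positivity claim in (2) by a separate combinatorial rewriting of $L_n$.

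First I would compute $S(z,F_{\theta})$ from \eqref{reps}. Since $F_{\theta}$ is real symmetric, $F_{\theta}^{\ast}=F_{\theta}$, and direct multiplication gives $\tr S(z,F_{\theta})=z+z^{-1}$ and $\det S(z,F_{\theta})=(\det F_{\theta})^{2}=(\cos^{2}\theta-\sin^{2}\theta)^{2}=\cos^{2}(2\theta)$. For any $2{\times}2$ matrix $M$ with eigenvalues $\lambda_{\pm}$ (so $\lambda_{+}+\lambda_{-}=\tr M$, $\lambda_{+}\lambda_{-}=\det M$), one has $\tr M^{n}=\lambda_{+}^{n}+\lambda_{-}^{n}$. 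For $\theta\in[0,\pi/4)$ one has $c:=\cos(2\theta)>0$, and setting $u:=(z+z^{-1})/(2c)$ gives $\lambda_{\pm}=c(u\pm\sqrt{u^{2}-1})$. The Chebyshev identity $2T_{n}(u)=(u+\sqrt{u^{2}-1})^{n}+(u-\sqrt{u^{2}-1})^{n}$ then yields $L_{n}(z,F_{\theta})=2c^{n}T_{n}(u)$, which is \eqref{EChP}.

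Part (1) now follows at once from \eqref{EChP}: $T_{n}$ has leading coefficient $2^{n-1}$, so the highest-degree term of (3) is $(z+z^{-1})^{n}$, producing $z^{n}+z^{-n}$ at the extremes and leaving all other exponents in $|k|\leq n-1$; the case $\theta=\pi/4$ is covered directly by Lemma~\ref{let}. The symmetry $p_{k,n}(\theta)=p_{-k,n}(\theta)$ I would derive from the involution $J=\big[\begin{smallmatrix}0&1\\1&0\end{smallmatrix}\big]$: because $JF_{\theta}J=F_{\theta}$ and $J\big[\begin{smallmatrix}z&0\\0&z^{-1}\end{smallmatrix}\big]J=\big[\begin{smallmatrix}z^{-1}&0\\0&z\end{smallmatrix}\big]$, conjugation by $J$ sends $S(z,F_{\theta})$ to $S(z^{-1},F_{\theta})$, whence $L_{n}(z,F_{\theta})=L_{n}(z^{-1},F_{\theta})$. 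The parity vanishing \eqref{StP1} then follows from $T_{n}(-u)=(-1)^{n}T_{n}(u)$: $T_{n}(u)$ contains only $u^{j}$ with $j\equiv n \Mod$, and substituting $u=(z+z^{-1})/(2c)$ and expanding $(z+z^{-1})^{j}$ produces only $z^{k}$ with $k\equiv j\equiv n \Mod$.

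The main obstacle is the strict positivity \eqref{StP2}: the Chebyshev expansion has alternating signs, so manifest positivity in $z$ is not visible from \eqref{EChP}. I would circumvent this by a different representation. By the cyclic invariance of the trace,
\[
L_{n}(z,F_{\theta})=\tr\bigl((F_{\theta}ZF_{\theta})^{n}\bigr)=\tr\bigl((ZA)^{n}\bigr),\quad Z:=\big[\begin{smallmatrix}z&0\\0&z^{-1}\end{smallmatrix}\big],\ A:=F_{\theta}^{2}=\big[\begin{smallmatrix}1&s\\s&1\end{smallmatrix}\big],\ s:=\sin(2\theta).
\]
Expanding combinatorially, $\tr(ZA)^{n}=\sum_{(i_{1},\ldots,i_{n})\in\{1,2\}^{n}}\prod_{k=1}^{n}(ZA)_{i_{k},i_{k+1}}$ (cyclic, $i_{n+1}=i_{1}$). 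Each entry of $ZA$ has the form $z^{\varepsilon}s^{\delta}$ with $\varepsilon=+1$ if the row index is $1$, $\varepsilon=-1$ if it is $2$, and $\delta=1$ off-diagonal, $\delta=0$ on-diagonal. Hence each sequence contributes $s^{f}z^{n_{1}-n_{2}}$, where $n_{j}:=\#\{k:i_{k}=j\}$ and $f:=\#\{k:i_{k}\neq i_{k+1}\}$, exhibiting every coefficient $p_{k,n}(\theta)$ as a sum of non-negative monomials in $s$. For $k\equiv n \Mod$ and $|k|\leq n-1$ one has $n_{1},n_{2}\geq 1$; any such cyclic sequence contains at least two flips (so $f\geq 2$), and thus $p_{k,n}(\theta)$ is bounded below by a positive multiple of $s^{2}$, which is strictly positive for $\theta\in(0,\pi/4)$.
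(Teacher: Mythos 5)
Your proposal is correct, and for statement (3) it is essentially the paper's argument: the paper also writes $\tr M^n=\lambda_1^n+\lambda_2^n$, computes the eigenvalues of $S(z,F_\theta)$ explicitly in \eqref{EoS}, and matches them against the representation $2T_n(\zeta)=(\zeta+\sqrt{\zeta^2-1})^n+(\zeta-\sqrt{\zeta^2-1})^n$; your route through $\tr S=z+z^{-1}$ and $\det S=\cos^2 2\theta$ is the same computation in different clothing. Where you genuinely diverge is statement (2). The paper splits $S(z,F_\theta)=zP_1(\theta)+z^{-1}P_{-1}(\theta)$ into the two rank-one matrices \eqref{Is}, expands $(S(z,F_\theta))^n$ as the sum \eqref{spn} over sign strings $\boldsymbol{\varepsilon}\in\{\pm1\}^n$, gets the parity claim \eqref{StP1} from $\nu(\boldsymbol{\varepsilon})\equiv n\Mod$, and gets positivity \eqref{StP2} from the fact that every entry of every $P_{\varepsilon_j}(\theta)$ is strictly positive for $\theta\in(0,\pi/2)$, so each trace in \eqref{Econ} is positive. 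You instead use cyclicity of the trace to pass to $\tr\bigl((ZA)^n\bigr)$ with $A=F_\theta^2=\bigl[\begin{smallmatrix}1&s\\ s&1\end{smallmatrix}\bigr]$ and expand as a sum over cyclic index sequences; this is a transfer-matrix count that exhibits $p_{k,n}(\theta)$ as $\sum s^{f}$ over sequences with prescribed $n_1-n_2=k$, where $f$ is the (even) number of flips. Both arguments are sound; yours buys a sharper byproduct (each $p_{k,n}$ is a polynomial in $\sin 2\theta$ with nonnegative integer coefficients graded by flip number, and $p_{k,n}(\theta)\geq\sin^2 2\theta$ since some admissible sequence has exactly two flips), while the paper's gives the cardinality count \eqref{Card} directly. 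Two further small differences: you obtain the parity \eqref{StP1} and the shape \eqref{FPi} as corollaries of \eqref{EChP} (with the $\theta=\pi/4$ endpoint handled separately via Lemma \ref{let}), whereas the paper gets them from the $\boldsymbol{\varepsilon}$-expansion independently of the Chebyshev formula; and your conjugation by $J=\bigl[\begin{smallmatrix}0&1\\1&0\end{smallmatrix}\bigr]$ giving $L_n(z,F_\theta)=L_n(z^{-1},F_\theta)$ is a clean proof of the symmetry $p_{k,n}=p_{-k,n}$, which the paper asserts in \eqref{FPi} without an explicit argument (though it also follows from its expansion by reversing the string $\boldsymbol{\varepsilon}$ and transposing).
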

{\ }
\begin{rem}
\label{SpTet}
For \(\theta=0\),
\begin{equation}
\label{StP0}
L_n(z,F_0)=z^{n}+z^{-n}\,.
\end{equation}
 So, all coefficients \(p_{k,n}(0)\) of the Laurent polynomial \(L_n(z,F_0)\) vanish:
 \begin{equation}
 \label{Stp2}
 p_{k,n}(0)=0,\quad -(n-1)\leq k \leq n-1.
 \end{equation}
 For \(\theta=\frac{\pi}{4}\),
 \begin{equation}
 \label{Qua1}
 L_n(z,\tfrac{\pi}{4})=\big(z+z^{-1}\big)^n,
 \end{equation}
 so
 \begin{equation}
 \label{Qua2}
 p_{k,n}(\tfrac{\pi}{4})=0 \ \textup{ if } k\not=n\!\!\!\mod 2,\quad
 p_{k,n}(\tfrac{\pi}{4})=\binom{n}{\frac{n-k}{2}} \ \textup{ if } k=n\!\!\!\mod 2.
 \end{equation}
\end{rem}
\begin{proof}[Proof of Theorem \ref{TaP}] {\ } \\
\textbf{1.} The equalities \eqref{TaP} are the equalities \eqref{DePi} for the matrix \(G=F_\theta\).\\[1.0ex]
\textbf{2.} It is clear that \(S(z,F_\theta)=zP_1(\theta)+P_{-1}(\theta)z^{-1}\), where
\begin{equation}
\label{Is}
P_1(\theta)=\begin{bmatrix}\cos\theta\\ \sin\theta\end{bmatrix}
\begin{bmatrix}\cos\theta&\sin\theta\end{bmatrix},\quad
P_{-1}(\theta)=\begin{bmatrix}\sin\theta\\ \cos\theta\end{bmatrix}
\begin{bmatrix}\sin\theta&\cos\theta\end{bmatrix}.
\end{equation}
Thus
\begin{equation}
\label{spn}
(S(z,F_\theta))^n=\sum\limits_{\boldsymbol\varepsilon}z^{\nu(\boldsymbol{\varepsilon})}
P_{\varepsilon_1}(\theta)\cdot P_{\varepsilon_2}(\theta)\cdot\,\,\cdots\,\,\cdot P_{\varepsilon_n}(\theta),
\end{equation}
the sum in \eqref{spn} runs over all combination\footnote{
There are \(2^n\) such combinations.
}
\(\boldsymbol{\varepsilon}=\varepsilon_1\varepsilon_2\,\,\ldots\,\,\varepsilon_n\) of subscripts
with either \(\varepsilon_k=1\) or \(\varepsilon_k=-1\),
\(\nu(\boldsymbol{\varepsilon})=\varepsilon_1+\varepsilon_2+\,\,\cdots\,\,+\varepsilon_n\).

It is clear that
\begin{equation*}
\nu(\boldsymbol{\varepsilon})=n-2\nu_{-}(\boldsymbol{\varepsilon})=
2\nu_{+}(\boldsymbol{\varepsilon})-n,
\end{equation*}
where
\begin{equation}
\label{nupm}
\nu_{+}(\boldsymbol{\varepsilon})=\#\{k:\,\varepsilon_k=+1\},
\quad
\nu_{-}(\boldsymbol{\varepsilon})=\#\{k:\,\varepsilon_k=-1\}.
\end{equation}
Therefore
\begin{equation}
\label{emt}
\nu(\boldsymbol{\varepsilon})=n \Mod \ \ \forall\,
\boldsymbol{\varepsilon}=(\varepsilon_1,\varepsilon_2,\,\,\cdots\,\,,\varepsilon_n).
\end{equation}
Regrouping summands in \eqref{spn}, we obtain
\begin{equation}
\label{spnr}
(S(z,F_\theta))^n=\sum\limits_{-n\leq k\leq n}\!\!z^{k}\bigg(
\sum\limits_{\boldsymbol{\varepsilon}:\nu(\boldsymbol{\varepsilon})=k}
P_{\varepsilon_1}(\theta)\cdot P_{\varepsilon_2}(\theta)\cdot\,\,\cdots\,\,\cdot P_{\varepsilon_n}(\theta)\bigg).
\end{equation}
Thus the coefficients \(p_{k,n}(\theta)\) of the polynomial \(L_n(z,F_\theta)\), \eqref{FPi}, are:
\begin{multline}
\label{Econ}
p_{k,n}(\theta)=\sum\limits_{\boldsymbol{\varepsilon}:\nu(\boldsymbol{\varepsilon})=k}
\tr\big(P_{\varepsilon_1}(\theta)\cdot P_{\varepsilon_2}(\theta)\cdot\,\,\cdots\,\,\cdot P_{\varepsilon_n}(\theta)\big),\\[-2.0ex]  -(n-1)\leq k\leq n-1,
 \end{multline}
the sum in \eqref{Econ} runs over the set  \(\{\boldsymbol{\varepsilon}:\,\nu(\boldsymbol{\varepsilon})=k\}\).

According to \eqref{emt},  if \(k\not=n\Mod\), then the set \(\{\boldsymbol{\varepsilon}:\,\nu(\boldsymbol{\varepsilon})=k\}\) is empty.
Thus  the sum in \eqref{Econ} vanishes if \(k\not=n\Mod\).
In other words, the condition \eqref{StP1} holds.
If an integer \(k\) satisfies the conditions
\begin{equation}
\label{Ink}
k=n\,\Mod,\quad -(n-1)\leq k\leq(n-1),
\end{equation}
then the set \(\{\boldsymbol{\varepsilon}:\,\nu(\boldsymbol{\varepsilon})=k\}\) is not empty.
The equality \(\nu(\boldsymbol{\varepsilon})=k\) means that
\[\nu_{+}(\boldsymbol{\varepsilon})=\tfrac{n+k}{2},\,\,
\nu_{-}(\boldsymbol{\varepsilon})=\tfrac{n-k}{2}.\]
Moreover if an integer \(k\) satisfies the condition \eqref{Ink}, then
\begin{equation}
\label{Card}
\#\{\boldsymbol{\varepsilon}:\,\nu(\boldsymbol{\varepsilon})=k\}=
\binom{n}{\tfrac{n+k}{2}}=\binom{n}{\tfrac{n-k}{2}}.
\end{equation}
For \(\theta\in(0,\pi/2)\), all entries each of the matrices \(P_{\varepsilon_j}(\theta)\) are
strictly positive. Hence all the entries each of the matrices
\(P_{\varepsilon_1}(\theta)\cdot P_{\varepsilon_2}(\theta)\cdot\,\,\cdots\,\,\cdot P_{\varepsilon_n}(\theta)\) are strictly positive.
All the more \(\tr \big(P_{\varepsilon_1}(\theta)\cdot P_{\varepsilon_2}(\theta)\cdot\,\,\cdots\,\,\cdot P_{\varepsilon_n}(\theta)\big)>0\). Therefore the condition \eqref{StP2} holds.\\[1.0ex]
\textbf{3.} For a \(2\times2\) matrix
\(M=\big[\begin{smallmatrix}m_{11}&m_{12}\\ m_{21}&m_{22}\end{smallmatrix}\big]\),
let \(\lambda_1(M)\) and \(\lambda_2(M)\) be the eigenvalues of \(M\), that is the roots
of the characteristic equation
\(\det\big(\big[\begin{smallmatrix}\lambda&0\\ 0&\lambda\end{smallmatrix}\big]-M\big)=0\).
For any power \(M^n\) of the matrix \(M\), \(n=1,2,3,\,\,\ldots\,\,\), the equalities
\begin{equation*}
\lambda_1(M^n)=(\lambda_1(M))^n,\quad \lambda_2(M^n)=(\lambda_2(M))^n
\end{equation*}
 hold. In particular, the trace \(\tr M^n\) of the matrix \(M^n\) can be expressed
 in terms of the eigenvalues of the matrix \(M\):
 \begin{equation}
 \label{trn}
 \tr M^n=(\lambda_1(M))^n+(\lambda_2(M))^n,\ \ n=1,2,3,\,\,\ldots\,\,.
 \end{equation}
 We apply \eqref{trn} to the matrix \(M=S(z,F_\theta)\). Taking into account \eqref{rpt}, we come
 to the equality
 \begin{subequations}
 \begin{equation}
 \label{ERe}
 L_n(z,F_{\theta})=\big(\lambda_1(S(z,F_\theta))\big)^n+\big(\lambda_2(S(z,F_\theta))\big)^n, \ \
 z\in\mathbb{C}.
  \end{equation}
 The eigenvalues of the matrix \(S(z,F_\theta)\) can be found explicitly:
 \begin{align}
 \lambda_1(S(z,F_\theta))&
 =\tfrac{z+z^{-1}}{2}+\sqrt{\left(\tfrac{z+z^{-1}}{2}\right)^2-\cos^22\theta},
 \notag\\[-1.0ex]
 \label{EoS}
 \\[-1.0ex]
 \lambda_2(S(z,F_\theta))&
 =\tfrac{z+z^{-1}}{2}-\sqrt{\Big(\tfrac{z+z^{-1}}{2}\Big)^2-\cos^22\theta}.\notag
 \end{align}
 \end{subequations}
 The Chebyshev polynomial \(T_n(\zeta)\) can be represented as
 \begin{subequations}
 \label{CPR}
 \begin{gather}
 \label{CPR1}
 T_n(\zeta)=\tfrac{1}{2}\big((\mu_1(\zeta))^n+(\mu_2(\zeta))^n\big),\\
 \intertext{where}
  \label{CPR2}
  \mu_1(\zeta)=\zeta+\sqrt{\zeta^2-1},\quad \mu_2(\zeta)=\zeta-\sqrt{\zeta^2-1}.
 \end{gather}
 \end{subequations}
 Comparing \eqref{EoS} with \eqref{CPR2}, we conclude that
 \begin{multline}
 \label{CoCo}
 \lambda_1(S(z,F_\theta))=\cos2\theta\,\mu_1\big(\tfrac{z+z^{-1}}{2\cos2\theta}\big),
 \quad
 \lambda_2(S(z,F_\theta))=\cos2\theta\,\mu_2\big(\tfrac{z+z^{-1}}{2\cos2\theta}\big),\\
 \theta\in[0,\pi/4),\ \ z\in\mathbb{C}\setminus 0.
 \end{multline}
 Comparing \eqref{ERe}, \eqref{CPR1} and \eqref{CoCo}, we obtain \eqref{EChP}.
 \end{proof}
 \begin{rem}
 \label{lic}
 Since \(2T_n(\frac{z+z^{-1}}{2})=z^n+z^{-n}\), the equality \eqref{StP0} is the special
 case of the equality \eqref{EChP} corresponding to the value \(\theta=0\).

 Since \(T_n(\zeta)=2^{n-1}\zeta^n+o(|\zeta|^n)\) as \(|\zeta|\to\infty\),
 the equality \eqref{Qua1} is the limiting case of the equality \eqref{EChP} corresponding to
the "value" \(\theta=\frac{\pi}{4}-0\).
 \end{rem}
 \begin{rem}
 \label{app} Applying the binomial formula, we derive from \eqref{CPR} that
 \begin{equation}
 \label{aechp}
 T_n({\zeta})=\sum\limits_{0\leq j\leq\frac{n}{2}}\binom{n}{2j}\zeta^{n-2j}(\zeta^2-1)^{j}\,.
 \end{equation}
 Substituting \(\zeta=\frac{z+z^{-1}}{2\cos2\theta}\) into \eqref{aechp} and taking into account
  \eqref{EChP}, we obtain the equality
 \begin{equation}
 \label{arlp}
 L_n(z,F_\theta)=2^{-(n-1)}\sum\limits_{0\leq j\leq\frac{n}{2}}\binom{n}{2j}
 (z+z^{-1})^{n-2j}\big(z^2+z^{-2}+2(1-\cos2\theta)\big)^{j}.
 \end{equation}
 From \eqref{arlp} it is evident that
 \[(-1)^nL_n(-z,F_\theta)=L_n(z,F_\theta).\]
 Hence the condition \eqref{StP1} holds. The condition \eqref{StP2} can be derived from \eqref{arlp}.
 The coefficients \(p_{k,n}(\theta)\) of the Laurent polynomial \(L_n(z,F_\theta)\) majorize the coefficients of the Laurent polynomial
 \(2^{-(n-1)}(z+z^{-1})^n\).
 \end{rem}
 \noindent\textbf{Notation.} Let \(f:\,\mathbb{C}\to\mathbb{C}\) be a mapping and   \(E\subset\mathbb{C}\)
be a subset of \(\mathbb{C}\). By \(f^{[-1]}(E)\) we denote the preimage of the set \(E\) with respect to the mapping~\(f\):
\[f^{[-1]}(E)=\{\zeta\in\mathbb{C}:\,f(\zeta)\in E\}. \]

\begin{lem}
\label{Prcchp} Let \(T_n\) be the Chebyshev polynomial of degree \(n\), \(n=1,2,3,\,\,\ldots\,\,.\) Then
\begin{equation}
\label{Prche}
T_n^{[-1]}\big([-1,1]\big          )=[-1,1].
\end{equation}
\end{lem}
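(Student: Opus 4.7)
The plan is to prove the two inclusions $[-1,1]\subseteq T_n^{[-1]}([-1,1])$ and $T_n^{[-1]}([-1,1])\subseteq[-1,1]$ separately. The first is immediate from the trigonometric definition of the Chebyshev polynomial: every $\zeta\in[-1,1]$ may be written as $\zeta=\cos\phi$ with $\phi\in[0,\pi]$, whence $T_n(\zeta)=\cos(n\phi)\in[-1,1]$.

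For the substantive reverse inclusion, I would invoke the representation \eqref{CPR} already recorded in the proof of Theorem \ref{TaP}. Setting $\mu=\mu_1(\zeta)=\zeta+\sqrt{\zeta^2-1}$, one observes that $\mu_1(\zeta)\,\mu_2(\zeta)=\zeta^2-(\zeta^2-1)=1$, so $\mu_2(\zeta)=\mu^{-1}$, and hence
\[
T_n(\zeta)=\tfrac{1}{2}\bigl(\mu^n+\mu^{-n}\bigr),\qquad \zeta=\tfrac{1}{2}\bigl(\mu+\mu^{-1}\bigr).
\]
Note that $\mu\neq 0$, because $\mu=0$ would lead to the contradictory identity $\zeta^2=\zeta^2-1$. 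Write $\mu^n=re^{i\alpha}$ with $r>0$; then
\[
\mu^n+\mu^{-n}=(r+r^{-1})\cos\alpha+i(r-r^{-1})\sin\alpha.
\]
Assuming $T_n(\zeta)\in[-1,1]$, so that $\mu^n+\mu^{-n}\in[-2,2]$, vanishing of the imaginary part gives $(r-r^{-1})\sin\alpha=0$. If $\sin\alpha=0$, then $\mu^n+\mu^{-n}=\pm(r+r^{-1})$, whose modulus is at least $2$ with equality only at $r=1$; combined with the bound $|\mu^n+\mu^{-n}|\le 2$, this forces $r=1$. Otherwise $r=r^{-1}$, i.e.\ again $r=1$. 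In either case $|\mu^n|=1$, hence $|\mu|=1$, so $\mu=e^{i\psi}$ and
\[
\zeta=\tfrac{1}{2}\bigl(e^{i\psi}+e^{-i\psi}\bigr)=\cos\psi\in[-1,1],
\]
as required.

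I do not anticipate any genuine obstacle; the only delicate point is the simultaneous exploitation of the realness of $\mu^n+\mu^{-n}$ and the bound $|\mu^n+\mu^{-n}|\le 2$ to force $|\mu|=1$. This is a small arithmetical observation about the map $r\mapsto r+r^{-1}$ on $(0,\infty)$, which attains its minimum $2$ uniquely at $r=1$.
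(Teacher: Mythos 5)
Your proof is correct, but it takes a genuinely different route from the paper's. The paper disposes of the hard inclusion $T_n^{[-1]}([-1,1])\subseteq[-1,1]$ by a root-counting argument: for each $s\in[-1,1]$ the equation $T_n(\zeta)=s$ already has $n$ roots in $[-1,1]$ (distinct when $s\in(-1,1)$, counted with multiplicity at $s=\pm1$), and since $T_n-s$ has degree $n$ there can be no preimage outside $[-1,1]$. You instead exploit the representation \eqref{CPR}, parametrize $\zeta=\tfrac12(\mu+\mu^{-1})$, and show by an explicit computation with $\mu^n=re^{i\alpha}$ that the conditions ``$\mu^n+\mu^{-n}$ real'' and ``$|\mu^n+\mu^{-n}|\le 2$'' jointly force $r=1$, hence $\zeta=\cos\psi$. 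Your case analysis is sound (the key facts — $\mu_1\mu_2=1$, $\mu\neq0$, and $r+r^{-1}\ge 2$ with equality only at $r=1$ — are all verified), and the argument is valid for every branch choice of $\sqrt{\zeta^2-1}$ since the relevant expressions are symmetric in $\mu$ and $\mu^{-1}$. What each approach buys: the paper's is shorter and purely algebraic in flavor, but silently requires the multiplicity bookkeeping at $s=\pm1$; yours is more computational but entirely self-contained, reuses machinery \eqref{CPR} already introduced in the proof of Theorem \ref{TaP}, and fits naturally with the Joukowski-map viewpoint that the paper develops immediately afterwards for $\Psi_\theta$.
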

\begin{proof}{\ }\\
\textbf{a.} If \(\zeta\in[-1,1]\), then \(T_n(\zeta)\in[-1,1]\). Thus \(T_n^{[-1]}[-1,1]\supseteq[-1,1].\)\\
\textbf{b.} If \(s\in[-1,1]\), then the equation \(T_n(\zeta)=s\) has \(n\) roots \(\zeta_1(s),\,\ldots\,,\zeta_n(s)\) located within the interval \([-1,1]\). (If \(s\in(-1,1)\) these roots are even different.)
Since the polynomial \(T_n\) is of degree \(n\), the equation \(T_n(t)=s\) has no other roots.
Thus, if \(s\in[-1,1]\), then \(T_n^{[-1]}(\{s\})\subset[-1,1]\).
Thus \(T_n^{[-1]}([-1,1])\subseteq[-1,1]\).
\end{proof}
\noindent
 Let us introduce the mapping \(\Psi_\theta:\,\mathbb{C}\setminus0\to\mathbb{C}\):
 \begin{equation}
 \label{coma}
 \Psi_\theta(z)=\frac{z+\frac{1}{z}}{2\cos2\theta},
 \end{equation}
 \(\theta\in\big[0,\frac{\pi}{4}\big)\) is considered as a parameter. The mapping \(\Psi_\theta\)
 is related to the Joukowski mapping \(Jo:\,\mathbb{C}\setminus0\to\mathbb{C}\):
\[Jo(z)=\frac{z+\frac{1}{z}}{2}.\]
Concerning the Joukowski mapping, see for example \cite[Section I.5, pp.67-68.]{FB}

From properties of the Joukowski mapping we derive the following\\
 \textsf{Properties of the mapping} \(\Psi_\theta\):
\begin{enumerate}
\item
{\ }
\vspace*{-5.0ex}
\begin{equation}
\label{prps}
\Psi_\theta^{[-1]}\big([-1,1]\big)=\mathbb{T}_\theta^{+}\cup\mathbb{T}_\theta^{-},\quad
\Psi_\theta^{[-1]}\big((-1,1)\big)=
\overset{\hspace{-1.0ex}\circ}{\mathbb{T}_\theta^{+}}
\cup\overset{\hspace{-1.0ex}\circ}{\mathbb{T}_\theta^{-}},
\end{equation}
where
\begin{align*}
\mathbb{T}_\theta^{+}&=\{z\in\mathbb{C}:\,|z|=1,\,2\theta\leq\phantom{-}\arg z\leq\pi-2\theta\},\\
\mathbb{T}_\theta^{-}&=\{z\in\mathbb{C}:\,|z|=1,\,2\theta\leq-\arg z\leq\pi-2\theta\},\\
\overset{\hspace{-1.0ex}\circ}{\mathbb{T}_\theta^{+}}&=\{z\in\mathbb{C}:\,|z|=1,\,2\theta<\phantom{-}\arg z<\pi-2\theta\},\\
\overset{\hspace{-1.0ex}\circ}{\mathbb{T}_\theta^{-}}&=\{z\in\mathbb{C}:\,|z|=1,\,2\theta<-\arg z<\pi-2\theta\}.
\end{align*}
\item \(\Psi_\theta\) maps \(\mathbb{T}_\theta^{+}\) onto \([-1,1]\) homeomorphically, and
\(\Psi_\theta^{\prime}(z)\not=0\) for \(z\in\overset{\hspace{-1.0ex}\circ}{\mathbb{T}_\theta^{+}}\).
\item \(\Psi_\theta\) maps \(\mathbb{T}_\theta^{-}\) onto \([-1,1]\) homeomorphically, and
\(\Psi_\theta^{\prime}(z)\not=0\) for \(z\in\overset{\hspace{-1.0ex}\circ}{\mathbb{T}_\theta^{-}}\).
\end{enumerate}
\begin{thm}
\label{RoLP}
For each \(\theta\in\big[0,\frac{\pi}{4}\big)\) and for each \(n=1,2,3,\,\,\ldots\,\,\),
all roots of the Laurent polynomial \(L_{n}(z,F_{\theta})\) are located within the set
\(\overset{\hspace{-1.0ex}\circ}{\mathbb{T}_\theta^{+}}\cup%
\overset{\hspace{-1.0ex}\circ}{\mathbb{T}_\theta^{-}}\) and are simple (i.e. of multiplicity one).
\end{thm}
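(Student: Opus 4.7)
The plan is to read the zeros of \(L_n(\,\cdot\,,F_\theta)\) off the factorisation of Theorem~\ref{TaP}(3),
\[
L_n(z,F_\theta) = 2(\cos 2\theta)^n\, T_n(\Psi_\theta(z)),
\]
and to locate them by inverting the two maps \(T_n\) and \(\Psi_\theta\) in turn. The zeros of \(T_n\) are the \(n\) Chebyshev nodes \(\zeta_k = \cos\frac{(2k-1)\pi}{2n}\), \(k=1,\dots,n\); each of them lies in the open interval \((-1,1)\) and is simple, so in particular \(T_n'(\zeta_k)\neq 0\).

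Hence the zero set of \(L_n(\,\cdot\,,F_\theta)\) is \(\Psi_\theta^{[-1]}(\{\zeta_1,\dots,\zeta_n\})\). Since each \(\zeta_k\) lies in \((-1,1)\), Property~(1) of \(\Psi_\theta\) immediately places every such preimage inside \(\overset{\circ}{\mathbb{T}_\theta^{+}}\cup\overset{\circ}{\mathbb{T}_\theta^{-}}\), the set asserted in the theorem. To get the count and the simplicity I would invoke Properties~(2) and (3): each of \(\Psi_\theta|_{\overset{\circ}{\mathbb{T}_\theta^{+}}}\) and \(\Psi_\theta|_{\overset{\circ}{\mathbb{T}_\theta^{-}}}\) is a homeomorphism onto \((-1,1)\) with non-vanishing derivative, so each \(\zeta_k\) has exactly one preimage in each open arc, producing \(2n\) distinct zeros of \(L_n(\,\cdot\,,F_\theta)\).

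Simplicity then follows from the chain rule:
\[
\frac{d}{dz}L_n(z,F_\theta) = 2(\cos 2\theta)^n\, T_n'(\Psi_\theta(z))\,\Psi_\theta'(z),
\]
and at each of the \(2n\) points just produced all three factors are non-zero --- respectively because \(\theta<\pi/4\), because \(\zeta_k\) is a simple zero of \(T_n\), and because \(\Psi_\theta'\) is non-zero on the two open arcs. Since \(z^n L_n(z,F_\theta)\) is an ordinary polynomial of degree \(2n\) in \(z\) (Lemma~\ref{let}), it has at most \(2n\) zeros counted with multiplicity, so the \(2n\) simple zeros just exhibited are all of them.

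The only small subtlety I anticipate is ensuring that the two arcs contribute disjoint preimages. This is immediate from \(\overset{\circ}{\mathbb{T}_\theta^{+}}\cap\overset{\circ}{\mathbb{T}_\theta^{-}}=\emptyset\), which remains valid even in the limiting case \(\theta=0\), where the two arcs meet only at the excluded endpoints \(\pm 1\). I do not foresee a genuine obstacle in executing this plan; the structural content is already packaged in Theorem~\ref{TaP} and in the listed properties of \(\Psi_\theta\).
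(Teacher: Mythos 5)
Your proposal is correct and follows essentially the same route as the paper: both read the zeros of \(L_n(\,\cdot\,,F_\theta)\) off the factorisation \eqref{EChP} as the preimage under \(\Psi_\theta\) of the (simple, interior) zeros of \(T_n\), and both deduce simplicity from the non-vanishing of \(\Psi_\theta'\) on the open arcs. The only difference is that you additionally count the \(2n\) zeros against the degree of \(z^nL_n(z,F_\theta)\), a harmless extra check the paper omits.
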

\begin{proof}
We consider the function \(T_n\big(\tfrac{z+z^{-1}}{2\cos2\theta}\big)\) which appears in
\eqref{EChP} as a composition
\(T_n\circ\Psi_\theta\)  of the Chebyshev polynomial \(T_n\) and the function \(\Psi_\theta\) defined by \eqref{coma}. The roots of the polynomial \(T_n\) form the set \(T_n^{[-1]}(\{0\})\).
Since \(\{0\}\in(-1,1)\) and
\begin{math}
\Psi_\theta^{[-1]}\big((-1,1)\big)=
\overset{\hspace{-1.0ex}\circ}{\mathbb{T}_\theta^{+}}
\cup\overset{\hspace{-1.0ex}\circ}{\mathbb{T}_\theta^{-}},
\end{math}
all roots of the function \(T_n\big(\tfrac{z+z^{-1}}{2\cos2\theta}\big)\) lie within the set
\(\overset{\hspace{-1.0ex}\circ}{\mathbb{T}_\theta^{+}}
\cup\overset{\hspace{-1.0ex}\circ}{\mathbb{T}_\theta^{-}}\). Since all roots of \(T_n\) are simple
and the derivative \(\Psi_\theta^{\prime}(z)\) does not vanish for \(z\in\overset{\hspace{-1.0ex}\circ}{\mathbb{T}_\theta^{+}}
\cup\overset{\hspace{-1.0ex}\circ}{\mathbb{T}_\theta^{-}}\), all roots of the function
\(T_n\big(\tfrac{z+z^{-1}}{2\cos2\theta}\big)\) are simple.
Now Theorem \ref{RoLP} is a consequence of the statement 3 of Theorem \ref{TaP}.
(See the equality \eqref{EChP}.)
\end{proof}
\begin{rem}
\label{Sym}
Since
\begin{equation}
\label{SyRe}
L_{n}(z,F_{\theta})=(-1)^nL_{n}(-z,F_{\theta}),\ \
 L_{n}(\overline{z},F_{\theta})=\overline{L_{n}(z,F_{\theta})}\quad \forall\,z\in\mathbb{C}\setminus0,
 \end{equation}
 the set of all roots of the Laurent polynomial \(L_{n}(\,.\,,F_{\theta})\) is symmetric both with respect to the real axis and with respect to the imaginary axis.
\end{rem}
  \section{The parametrization of the set \(\boldsymbol{\{L_n(z,G)\}_G}\) of Laurent polynomials
  by free parameters}
 \label{sec3}
 \begin{thm}
 \label{Cor1}
 For each \(n=2,3,\,\,\ldots\,\,\), the family of Laurent polynomials
 \(\{L_n(z,G)\}_{G}\), where \(G\) runs over the set of all generic \(2\times2\) matrices with complex
 entries, is a three-parametric family. The representation \eqref{NorFo} is a  parametrization of this family by free parameters \(R,\rho,\theta\)\textup{:}
 \begin{enumerate}
 \item
 Given a generic \(2\times2\) matrix \(G\) with complex entries, then
 for every \(n=1,2,3,\,\,\ldots\,\,\)
 the Laurent polynomial \(L_n(z,G)\) is representable in the form
 \begin{equation}
 \label{norFo}
 L_n(z,G)= R^nL_n(\rho z,F_\theta),\ \ z\in\mathbb{C},
 \end{equation}
 with some \(R\in(0,\infty),\,\rho\in(0,\infty),\theta\in[0,\pi/4]\).
 \item Given a triple \((R,\rho,\theta)\) of numbers which satisfy the condition
 \begin{equation}
 \label{TrCo}
 R\in(0,\infty),\,\rho\in(0,\infty),\,\theta\in[0,\pi/4],
 \end{equation}
  then there exists the generic matrix \(G_{R,\rho,\theta}\) such that the equalities
 \begin{equation}
 \label{nof}
 L_n(z,G_{R,\rho,\theta})=R^nL_n(\rho z,F_\theta)
 \end{equation}
 hold for every \(n=1,2,3,\,\,\ldots\,\,\).
\item If the triples \((R_1,\rho_1,\theta_1)\) and \((R_2,\rho_2,\theta_2)\) satisfy
the condition \eqref{TrCo} and  the functions \(R_1^nL_n(\rho_1 z,F_{\theta_1})\)
and \(R_2^nL_n(\rho_2 z,F_{\theta_2})\) of variable \(z\) coincide for some
\(n\geq2\), then \(R_1=R_2\), \(\rho_1=\rho_2\), and \(\theta_1=\theta_2\).
If the functions \(R_1^nL_1(\rho_1 z,F_{\theta_1})\)
and \(R_2^nL_1(\rho_2 z,F_{\theta_2})\) coincide, then \(R_1=R_2,\,\rho_1=\rho_2\),
but \(\theta_1,\theta_2\) can be arbitrary.
 \end{enumerate}
 \end{thm}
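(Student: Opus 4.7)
Statement (1) is exactly the content of Theorem \ref{NoF}, already proved in Section \ref{sec1}, so nothing new is needed for it.

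For statement (2), the plan is to exhibit $G_{R,\rho,\theta}$ explicitly. Given a triple $(R,\rho,\theta)$ satisfying \eqref{TrCo}, I set $R_1=\sqrt{R\rho}$ and $R_2=\sqrt{R/\rho}$, so that $R_1R_2=R$ and $R_1/R_2=\rho$. The natural candidate is
\[
G_{R,\rho,\theta}=F_\theta\begin{bmatrix}R_1 & 0 \\ 0 & R_2\end{bmatrix}=\begin{bmatrix}R_1\cos\theta & R_2\sin\theta \\ R_1\sin\theta & R_2\cos\theta\end{bmatrix}.
\]
A direct calculation (using $\cos^2\theta+\sin^2\theta=1$) gives $R_1(G_{R,\rho,\theta})=R_1$ and $R_2(G_{R,\rho,\theta})=R_2$; both columns are nonzero for every $\theta\in[0,\pi/4]$ because $R_1,R_2>0$ and at least one of $\cos\theta,\sin\theta$ is positive, so $G_{R,\rho,\theta}$ is generic. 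Moreover the normalized matrix $H$ produced from $G_{R,\rho,\theta}$ by \eqref{HM} is precisely $F_\theta$, and Lemma \ref{Rel} then yields \eqref{nof}.

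For statement (3), I would first recover $R$ and $\rho$ by comparing the extreme coefficients. By \eqref{FPi}, the coefficients of $z^{\pm n}$ in $L_n(z,F_\theta)$ are both equal to $1$, independent of $\theta$. Hence if $R_1^nL_n(\rho_1 z, F_{\theta_1})\equiv R_2^nL_n(\rho_2 z, F_{\theta_2})$, equating the coefficients of $z^n$ and of $z^{-n}$ gives $R_1^n\rho_1^n=R_2^n\rho_2^n$ and $R_1^n\rho_1^{-n}=R_2^n\rho_2^{-n}$; multiplying and dividing these two identities produces $R_1=R_2$ and $\rho_1=\rho_2$.

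The main obstacle is then to pin down $\theta$ from the reduced identity $L_n(z,F_{\theta_1})\equiv L_n(z,F_{\theta_2})$. For $n=1$, a direct computation shows $L_1(z,F_\theta)=z+z^{-1}$ is independent of $\theta$, which accounts for the failure of uniqueness there. For $n\geq 2$, I would evaluate both sides at $z=1$: by the eigenvalue formula \eqref{EoS}, the eigenvalues of $S(1,F_\theta)$ are $1\pm\sin 2\theta$, so
\[
L_n(1,F_\theta)=(1+\sin 2\theta)^n+(1-\sin 2\theta)^n.
\]
The function $t\mapsto(1+t)^n+(1-t)^n$ is strictly increasing on $[0,1]$ for $n\geq 2$ (its derivative $n[(1+t)^{n-1}-(1-t)^{n-1}]$ is positive on $(0,1]$), so $\sin 2\theta\in[0,1]$, and hence $\theta\in[0,\pi/4]$, is recovered uniquely, completing the proof.
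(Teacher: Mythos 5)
Your proof is correct and follows essentially the same route as the paper: part (2) via the matrix $F_\theta\big[\begin{smallmatrix}\sqrt{R\rho}&0\\0&\sqrt{R/\rho}\end{smallmatrix}\big]$, and part (3) by matching the coefficients of $z^{\pm n}$ to recover $R$ and $\rho$ and then evaluating the reduced identity at $z=1$. Your explicit computation $L_n(1,F_\theta)=(1+\sin 2\theta)^n+(1-\sin 2\theta)^n$, with the elementary derivative argument, actually substantiates the strict monotonicity of $L_n(1,F_\theta)$ in $\theta$ more completely than the paper, which simply attributes it to the positivity of the coefficients $p_{k,n}(\theta)$ in Theorem \ref{TaP}.
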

 \begin{proof}{\ }\\
 \textbf{1}. The statement 1 of Theorem \ref{Cor1}  coincides with the statement 1
 of Theorem~\ref{NoF}.\\
 \textbf{2}. Given a triple \((R,\rho,\theta)\) , we define
 \begin{equation}
 r_1=\sqrt{R\cdot\rho},\ \ r_2=\sqrt{R/\rho}, \ \ G_{R,\rho,\theta}=F_\theta
 \begin{bmatrix}
 r_1&0\\
 0&r_2
 \end{bmatrix}
 \cdot
 \end{equation}
 Then
 \begin{align*}
 G_{R,\rho,\theta}
 \begin{bmatrix}
 z&0\\
 0&z^{-1}
 \end{bmatrix}
 G_{R,\rho,\theta}^\ast&=R\,F_\theta
 \begin{bmatrix}
 \rho z&0\\
 0&(\rho z)^{-1}
 \end{bmatrix}
 F_\theta.\\
 \intertext{In other words,}
 S(z,G_{R,\rho,\theta})&=R\cdot S(\rho z,F_{\theta}).\\
 \intertext{Finally}
 \tr \big(S(z,G_{R,\rho,\theta})^n\big)&=R^n\cdot\tr\big(S(\rho z,F_{\theta})^n\big).
 \end{align*}
 Thus the equality \eqref{nof} holds.\\[1.0ex]
 \textbf{3}. We assume that
 \begin{equation}
 \label{CoEq}
 R_1^nL_n(\rho_1 z,F_{\theta_1})=R_2^nL_n(\rho_2 z,F_{\theta_2}) \ \ \forall\,z\in\mathbb{C}\setminus 0
 \end{equation}
 by some \(n\). According to \eqref{FPi}, the equality \eqref{CoEq} implies that
 \begin{gather*}
 R_1^n\big((\rho_1z)^n+(\rho_1z)^{-n}+\hspace*{-3.5ex}\sum\limits_{-(n-1\leq k\leq(n-1))}\hspace*{-3.5ex}p_k(\theta_1)(\rho_1z)^k\big)=\\
= R_2^n\big((\rho_2z)^n+(\rho_2z)^{-n}+\hspace*{-3.5ex}\sum\limits_{-(n-1\leq k\leq(n-1))}\hspace*{-3.5ex}p_k(\theta_2)(\rho_2z)^k\big).
 \end{gather*}
 Comparing the coefficients by the leading terms \(z^n\) and \(z^{-n}\) we see that
 \begin{equation*}
 R_1^n\rho_1^n=R_2^n\rho_2^n,\quad R_1^n\rho_1^{-n}=R_2^n\rho_2^{-n}.
 \end{equation*}
 From these equalities it follows that \(R_1=R_2\) and \(\rho_1=\rho_2\). Now the equality
 \eqref{CoEq} is reduced to the equality
 \begin{equation*}
 L_n(z,F_{\theta_1})= L_n(z,F_{\theta_2})\ \ \forall\,z\in\mathbb{C}\setminus0.
 \end{equation*}
 In particular,
 \begin{equation*}
 L_n(1,F_{\theta_1})= L_n(1,F_{\theta_2}).
 \end{equation*}
 According to the statement 2 of Theorem \ref{TaP}, the value \(L_n(1,F_\theta)\)
 increases strictly monotonically in the interval \(\theta\in[0,\pi/4]\) if \(n\geq2\).
 Therefore \(\theta_1=\theta_2\).

 The Laurent polynomial \(L_1(z,F_\theta)=z+z^{-1}\) does not depend on \(\theta\).
 \end{proof}
\section{Trigonometric polynomials generated by \(\boldsymbol{2\times2}\) matrices}
 \label{sec4}
The formula \eqref{EChP} suggests to relate the family of trigonometric polynomials
\(\taul_{n,\theta}(t)\) to the family of Laurent polynomials \(L_n(z,\theta)\):
\begin{defn}
\label{DeTP}
For \(\theta\in[0,\tfrac{\pi}{4})\) and \(n=1,2,3,\,\,\ldots\,\,\),
we define the function \(\taul_{n,\theta}(t)\) of variable \(t\in\mathbb{C}\):
\begin{equation}
\label{DeTp}
\taul_{n,\theta}(t)\stackrel{\textup{\tiny def}}{=}\tfrac{1}{2(\cos2\theta)^n}\cdot L_n(e^{it},F_\theta),
\end{equation}
where the function \(L_n(z,F_\theta)\) was defined by \eqref{Rpt}.
\end{defn}
\begin{lem}
\label{TryP}
The function \(\taul_{n,\theta}(t)\) is an even trigonometric polynomial of degree \(n\):
\begin{equation}
\label{ETP}
\taul_{n,\theta}(t)=\frac{1}{(\cos2\theta)^n}\cos nt+
\sum\limits_{0\leq k\leq (n-1)}\tau_{k,n}(\theta)\cos kt,
\end{equation}
where the coefficients \(\tau_{k,n}(\theta)\) are related to the coefficients \(p_{k,n}(\theta)\)
of the Laurent polynomial \(L_n(z,F_\theta)\), \eqref{FPi}, by the equalities
\[\tau_{0,n}(\theta)=\frac{p_{0,n}(\theta)}{2(\cos2\theta)^n},
\ \ \tau_{k,n}(\theta)=\frac{p_{k,n}(\theta)}{(\cos2\theta)^n},\ 1\leq k\leq n-1.\]
In particular,
\begin{subequations}
  \label{StT}
      \begin{align}
      \label{StT1}
       \tau_{k,n}(\theta)=0,&\quad -(n-1)\leq k \leq n-1,\ k\not=n \Mod,\\
      \label{StT2}
      \tau_{k,n}(\theta)>0,&\quad -(n-1)\leq k \leq n-1,\ k=n \Mod.
      \end{align}
      \end{subequations}
\end{lem}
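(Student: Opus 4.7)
The proof is essentially a routine substitution, so the plan is short. I would begin directly from Definition~\ref{DeTP}, namely $\taul_{n,\theta}(t) = \frac{1}{2(\cos2\theta)^n} L_n(e^{it},F_\theta)$, and substitute the explicit expansion of $L_n(z,F_\theta)$ provided by \eqref{FPi} in Theorem~\ref{TaP}, setting $z = e^{it}$:
\[
L_n(e^{it},F_\theta) = e^{int} + e^{-int} + \sum_{-(n-1)\leq k\leq n-1} p_{k,n}(\theta)\, e^{ikt}.
\]

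Next I would exploit the palindromic symmetry $p_{k,n}(\theta)=p_{-k,n}(\theta)$, also given by \eqref{FPi}, to pair up conjugate exponentials:
\[
p_{k,n}(\theta)\,e^{ikt} + p_{-k,n}(\theta)\,e^{-ikt} = 2 p_{k,n}(\theta)\cos(kt) \quad (k\geq 1),
\]
while the leading terms give $e^{int}+e^{-int}=2\cos(nt)$ and the $k=0$ term contributes just $p_{0,n}(\theta)$. Dividing by $2(\cos 2\theta)^n$ then yields precisely the representation \eqref{ETP}, with the stated relations between $\tau_{k,n}(\theta)$ and $p_{k,n}(\theta)$ falling out of the factor-of-two bookkeeping. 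Since the resulting expression is a real linear combination of $\cos(kt)$ for $0\leq k\leq n$ with the coefficient of $\cos(nt)$ equal to $1/(\cos 2\theta)^n \neq 0$, the function $\taul_{n,\theta}(t)$ is an even trigonometric polynomial of degree exactly $n$.

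For the final assertions \eqref{StT1} and \eqref{StT2} about vanishing and positivity of the coefficients $\tau_{k,n}(\theta)$, I would simply invoke the corresponding statements \eqref{StP1} and \eqref{StP2} from part~(2) of Theorem~\ref{TaP}: since $\tau_{k,n}(\theta)$ is a positive multiple of $p_{k,n}(\theta)$ (the factor $1/(\cos 2\theta)^n$ is strictly positive for $\theta\in[0,\pi/4)$, and the factor $1/2$ in the $k=0$ case likewise), vanishing and strict positivity transfer verbatim. No step is a genuine obstacle; the whole argument is a direct unwinding of the definition combined with the results already established in Theorem~\ref{TaP}.
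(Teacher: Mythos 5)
Your proof is correct and is exactly the routine substitution the paper has in mind: the paper states Lemma \ref{TryP} without any written proof, treating it as an immediate consequence of Definition \ref{DeTP} and the expansion \eqref{FPi} with its symmetry \(p_{k,n}(\theta)=p_{-k,n}(\theta)\), which is precisely what you carry out. The bookkeeping of the factor of two at \(k=0\) and the transfer of \eqref{StP1}--\eqref{StP2} to \eqref{StT1}--\eqref{StT2} are both handled correctly.
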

\newpage
The following result is an immediate consequence of Theorem \ref{TaP}, statement 3:
\begin{thm}
\label{RChp}
The trigonometric polynomial \(\taul_n(t,\theta)\) and the Chebyshev polynomial \(T_n(\zeta)\)
are related by the equality
\begin{equation}
\label{rChp}
\taul_{n,\theta}(t)=T_n\big(\tfrac{\cos t}{\cos 2\theta}\big), \ \ t\in\mathbb{C},
\ \ \theta\in[0,\tfrac{\pi}{4}).
\end{equation}
\end{thm}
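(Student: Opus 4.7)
The plan is to observe that Theorem \ref{RChp} is essentially a direct substitution into the identity \eqref{EChP} already established in Theorem \ref{TaP}, part 3. No new structural work is required; the content of the theorem is that the normalization factor built into Definition \ref{DeTP} is exactly the one that strips away the extra $2(\cos 2\theta)^n$ on the right-hand side of \eqref{EChP}.

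Concretely, I would start from the defining equality
\[
\taul_{n,\theta}(t) = \frac{1}{2(\cos 2\theta)^n}\, L_n(e^{it}, F_\theta),
\]
substitute $z = e^{it}$ into the Chebyshev representation \eqref{EChP}, namely
\[
L_n(e^{it}, F_\theta) = 2(\cos 2\theta)^n\, T_n\!\left(\frac{e^{it} + e^{-it}}{2\cos 2\theta}\right),
\]
and then simplify using $\tfrac{1}{2}(e^{it} + e^{-it}) = \cos t$ (valid for all $t \in \mathbb{C}$, since this is the standard analytic continuation of cosine). The factor $2(\cos 2\theta)^n$ cancels against the normalization, yielding \eqref{rChp} directly.

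The only mild point worth mentioning is the domain: \eqref{EChP} is a polynomial identity in $z$ on $\mathbb{C}\setminus\{0\}$, so setting $z = e^{it}$ with arbitrary complex $t$ is legitimate, and the resulting identity in $t$ extends to all of $\mathbb{C}$ by analyticity on both sides. There is no real obstacle to this proof; the work was already done in establishing \eqref{EChP}, and the present theorem is merely that observation repackaged on the unit circle.
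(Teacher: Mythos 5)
Your proof is correct and follows exactly the route the paper takes: the paper simply declares Theorem \ref{RChp} an immediate consequence of statement 3 of Theorem \ref{TaP}, and your substitution \(z=e^{it}\) into \eqref{EChP} together with \(\tfrac{1}{2}(e^{it}+e^{-it})=\cos t\) and the normalization in Definition \ref{DeTP} is precisely that deduction, spelled out.
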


\noindent
 Let \(\Phi_\theta:\,\mathbb{C}\to\mathbb{C}\) be the mapping defined as
\begin{equation}
\label{Psi}
\Phi_\theta(t)=\tfrac{\cos t}{\cos2\theta},
\end{equation}
 where \(\theta\in\big[0,\tfrac{\pi}{4}\big)\) is considered as
a parameter.
\begin{lem}
\label{Phip}
For \(\theta\in\big[0,\frac{\pi}{4}\big)\), the function \(\Phi_\theta\) possesses the
following properties:
\begin{enumerate}
\item
\begin{subequations}
\label{PreI}
\begin{equation}
\label{PreI1}
\Phi_\theta^{[-1]}\big([-1,1]\big)=\mathscr{P},\quad \Phi_\theta^{[-1]}\big((-1,1)\big)=\overset{\circ}{\mathscr{P}},
\end{equation}
 where
\begin{align}
\label{PreI2}
\mathscr{P}&=\!\!\!\!\bigcup\limits_{-\infty<p<\infty}[p\pi+2\theta,(p+1)\pi-2\theta],\\
\label{PreI3}
\overset{\circ}{\mathscr{P}}&=\!\!\!\!\bigcup\limits_{-\infty<p<\infty}(p\pi+2\theta,(p+1)\pi-2\theta),
\end{align}
\end{subequations}
are periodic systems of closed or open intervals respectively.
\item
For each \(p\), the function \(\Phi_\theta\) maps the interval
\([p\pi+2\theta,(p+1)\pi-2\theta]\) onto the interval \([-1,1]\) homeomorphically.
\item For each \(p\),
 \begin{equation}
 \label{NoVan}
 \Phi_\theta^{\prime}(t)\not=0\quad \forall\,t\in(p\pi+2\theta,(p+1)\pi-2\theta).
 \end{equation}
\end{enumerate}
\end{lem}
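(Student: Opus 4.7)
The plan is to reduce every assertion to elementary properties of $\cos t$, using that $\cos 2\theta > 0$ throughout $\theta \in [0, \pi/4)$. Beyond that, the proof is bookkeeping with the $2\pi$-periodicity of cosine and its strict monotonicity on half-period intervals.

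For part (1), since $\cos 2\theta > 0$, the condition $\Phi_\theta(t) \in [-1,1]$ is equivalent to $|\cos t| \le \cos 2\theta$, and its strict version is equivalent to $|\cos t| < \cos 2\theta$. On $[0,\pi]$ the cosine is a strictly decreasing bijection onto $[-1,1]$, so the solution set of $|\cos t| \le \cos 2\theta$ in $[0,\pi]$ is the single closed interval $[2\theta, \pi - 2\theta]$. Using $\cos(s+\pi) = -\cos s$, the analogous set in $[\pi, 2\pi]$ is $[\pi + 2\theta, 2\pi - 2\theta]$. The $2\pi$-periodicity then extends this to $\mathscr{P}$ as stated in \eqref{PreI2}, and the strict version produces $\overset{\circ}{\mathscr{P}}$ of \eqref{PreI3}.

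For part (2), I would fix $p$ and note that $[p\pi + 2\theta, (p+1)\pi - 2\theta] \subset [p\pi, (p+1)\pi]$, an interval on which $\cos$ is continuous and strictly monotone. Computing the endpoint values gives $\cos(p\pi + 2\theta) = (-1)^p \cos 2\theta$ and $\cos((p+1)\pi - 2\theta) = (-1)^{p+1}\cos 2\theta$; dividing by $\cos 2\theta$ yields $\pm 1$ in the appropriate order. Thus $\Phi_\theta$ restricted to this interval is a continuous strictly monotone surjection onto $[-1,1]$, hence a homeomorphism. Part (3) is immediate from the formula $\Phi_\theta'(t) = -\sin t / \cos 2\theta$: the zeros of $\sin t$ are precisely the integer multiples of $\pi$, and no such multiple lies in $(p\pi + 2\theta, (p+1)\pi - 2\theta)$ for any $\theta \in [0, \pi/4)$, the only candidates being the excluded endpoints $p\pi$ and $(p+1)\pi$.

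I expect no substantive obstacle; the statement is really an elementary consequence of the geometry of $\cos$. The main task is simply to keep the bookkeeping of the periodic interval system clean, and to observe that the boundary case $\theta = 0$ is covered automatically, because the open interval $(p\pi, (p+1)\pi)$ still excludes every zero of $\sin$.
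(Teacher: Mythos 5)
Your parts (2) and (3), and the real-variable bookkeeping in part (1), are correct and coincide with what the paper does (strict monotonicity of \(\cos\) on \([p\pi,(p+1)\pi]\), evaluation at the endpoints, and \(\Phi_\theta'(t)=-\sin t/\cos2\theta\)). The gap is in part (1): the paper defines \(\Phi_\theta\) as a mapping of \(\mathbb{C}\) into \(\mathbb{C}\), and by the paper's notational convention \(\Phi_\theta^{[-1]}(E)\) is the set of \emph{all} \(\zeta\in\mathbb{C}\) with \(\Phi_\theta(\zeta)\in E\). Your opening equivalence ``\(\Phi_\theta(t)\in[-1,1]\) iff \(|\cos t|\le\cos2\theta\)'' is valid only for real \(t\) (for complex \(t\) the left side additionally requires \(\cos t\in\mathbb{R}\), which the modulus condition alone does not give), and from that point on you work exclusively inside the real intervals \([0,\pi]\), \([\pi,2\pi]\), and their translates. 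So what you actually prove is \(\Phi_\theta^{[-1]}([-1,1])\cap\mathbb{R}=\mathscr{P}\), not \(\Phi_\theta^{[-1]}([-1,1])=\mathscr{P}\). The missing inclusion --- that no non-real \(t\) lies in the preimage --- is the one non-elementary assertion of the lemma, and it is precisely what the lemma is used for later: in Theorem \ref{Cor} the \emph{reality} of the roots of \(\taul_{n,\theta}(t)=0\) and of \((\taul_{n,\theta}(t))^2=1\) is deduced from \(\mathscr{P}\subset\mathbb{R}\).

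The paper's proof devotes its first step to exactly this point: writing \(t=x+iy\), one has \(\cos t=\cos x\cosh y-i\sin x\sinh y\), so if \(\cos t\) is real and \(y\ne0\) then \(\sin x=0\), whence \(|\cos t|=\cosh y>1\ge\cos2\theta\) and \(\Phi_\theta(t)\notin[-1,1]\); therefore the preimage of \([-1,1]\) is contained in \(\mathbb{R}\). Inserting this observation at the start of your part (1) closes the gap, after which your identification of the real trace of the preimage with \(\mathscr{P}\) (and of the strict version with \(\overset{\circ}{\mathscr{P}}\)) goes through exactly as you wrote it.
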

\begin{proof}{\ }\\
\textbf{1.} Let
\(\mathscr{S}=\{t\in\mathbb{C}:\,\, \cos t\in\mathbb{R}\}\).
Then \(\mathscr{S}\) is the union of the real axis and the countable set of vertical lines:
\[\mathscr{S}=\mathbb{R}\cup\big(\!\!\!\bigcup\limits_{-\infty<q<\infty}\!\!\!t_q+i\mathbb{R}\big),
\quad t_q=\frac{\pi}{2}(1+2q).\]
If \(s\in\mathbb{R}\setminus0\), then \(|\cos(t_q+is)|>1.\) Therefore, \(t\in\mathbb{R}\) if
\(\Phi_\theta(t)\in[-1,1]\).
Thus \eqref{PreI} holds.\\
\textbf{2.} On each interval \([p\pi+2\theta,(p+1)\pi-2\theta]\), the function \(\Phi_\theta(t)\)
behaves strictly monotonically. It decreases if \(p\) is even and increases if \(\pi\) is odd.\\
\textbf{3.} \[
(-1)^{p-1}\Phi_\theta^{\prime}(t)=
\frac{|\sin t|}{\cos 2\theta}>0\,\,\,\forall\,t\in(p\pi+2\theta,(p+1)\pi-2\theta).\]
\end{proof}

\noindent
 According to Theorem \ref{RChp},  the mapping \(\taul_{n,\theta}\) is a composition of the mappings \(\Phi_{\theta}\) and \(T_n\): \(\taul_{n,\theta}=T_n\circ\Phi_{\theta}\). Therefore the following result holds:
\begin{lem}
\label{fupr}
\ \ \ \
For each \(n=1,2,3,\,\,\ldots\,\,\) and \(\theta\in\big[0,\tfrac{\pi}{4}\big)\), the preimage
\(\taul_{n,\theta}^{[-1]}([-1,1])\) of the interval \([-1,1]\) with respect to the mapping
\(\taul_{n,\theta}\) is the system \(\mathscr{P}\) of intervals that appears
in \eqref{PreI2}:
\begin{equation}
\label{PrPr}
\taul_{n,\theta}^{[-1]}([-1,1])=\mathscr{P}.
\end{equation}
\end{lem}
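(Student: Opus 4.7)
The plan is to exploit the factorization $\taul_{n,\theta} = T_n \circ \Phi_\theta$ supplied by Theorem \ref{RChp} and reduce the preimage computation to two preimages that have already been identified in the paper, namely $T_n^{[-1]}([-1,1])$ (Lemma \ref{Prcchp}) and $\Phi_\theta^{[-1]}([-1,1])$ (Lemma \ref{Phip}, part 1).

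First, I would record the elementary set-theoretic identity that for any two maps $f,g$ and any subset $E$ of the codomain of $f$,
\begin{equation*}
(f\circ g)^{[-1]}(E)=g^{[-1]}\!\bigl(f^{[-1]}(E)\bigr).
\end{equation*}
Applied to $f=T_n$, $g=\Phi_\theta$ and $E=[-1,1]$, and combined with Theorem \ref{RChp}, this yields
\begin{equation*}
\taul_{n,\theta}^{[-1]}([-1,1])=\Phi_\theta^{[-1]}\!\bigl(T_n^{[-1]}([-1,1])\bigr).
\end{equation*}

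Next I would invoke Lemma \ref{Prcchp}, which tells us that $T_n^{[-1]}([-1,1])=[-1,1]$. Substituting this into the previous display collapses the right-hand side to $\Phi_\theta^{[-1]}([-1,1])$. Finally, Lemma \ref{Phip}, part 1 identifies this preimage with the periodic system $\mathscr{P}$ from \eqref{PreI2}, giving the desired equality \eqref{PrPr}.

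There is essentially no obstacle here: the lemma is a formal consequence of three facts already proved, and the only mild subtlety is to notice that one must apply Lemma \ref{Prcchp} (which is a statement about preimages in $\mathbb{C}$, not merely in $\mathbb{R}$) before applying Lemma \ref{Phip}, so that the intermediate set $T_n^{[-1]}([-1,1])$ is literally the real interval $[-1,1]$ and therefore lies in the domain for which Lemma \ref{Phip} describes $\Phi_\theta^{[-1]}$. No further computation is required.
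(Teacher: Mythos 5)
Your argument is correct and is exactly the one the paper intends: the lemma is stated as an immediate consequence of the factorization \(\taul_{n,\theta}=T_n\circ\Phi_{\theta}\) from Theorem \ref{RChp}, combined with Lemma \ref{Prcchp} and Lemma \ref{Phip}. You have merely written out explicitly the chain \(\taul_{n,\theta}^{[-1]}([-1,1])=\Phi_\theta^{[-1]}\bigl(T_n^{[-1]}([-1,1])\bigr)=\Phi_\theta^{[-1]}([-1,1])=\mathscr{P}\) that the paper leaves implicit.
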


\begin{thm}
\label{Cor}
For each \(n=1,2,3,\,\,\ldots\,\,\) and \(\theta\in\big[0,\tfrac{\pi}{4}\big)\)\textup{:}
\begin{enumerate}
\item All roots of the equation
\begin{equation}
\label{rtp}
\taul_{n,\theta}(t)=0
\end{equation}
 are real and simple. Moreover the roots of the equation \eqref{rtp} are located within the set \(\overset{\circ}{\mathscr{P}}\).
 \item All roots of the equation
 \begin{equation}
 \label{rtp2}
 (\taul_{n,\theta}(t))^2=1
 \end{equation}
 are real.
\end{enumerate}
\end{thm}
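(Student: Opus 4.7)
The overall plan is to exploit the composition identity $\taul_{n,\theta}=T_n\circ\Phi_\theta$ established in Theorem \ref{RChp}, together with the preimage descriptions supplied by Lemma \ref{Prcchp} (for $T_n$) and Lemma \ref{Phip} (for $\Phi_\theta$). Both assertions will then reduce to chasing preimages through this composition, with the chain rule handling the simplicity clause in part 1. Because every nontrivial analytic fact has already been packaged into earlier results, there is no real obstacle; the only point of care is to use the \emph{open} interval $(-1,1)$ rather than $[-1,1]$ when treating part 1, so that the roots land in the open system $\overset{\circ}{\mathscr{P}}$ and not merely in its closure.

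For part 1, I would read $\taul_{n,\theta}(t)=0$ as $\Phi_\theta(t)\in T_n^{[-1]}(\{0\})$. The zero set of $T_n$ consists of the $n$ simple Chebyshev nodes $\zeta_k=\cos\frac{(2k-1)\pi}{2n}$, all lying in $(-1,1)$. Consequently any root $t$ satisfies $\Phi_\theta(t)\in(-1,1)$, and the identity $\Phi_\theta^{[-1]}((-1,1))=\overset{\circ}{\mathscr{P}}$ from \eqref{PreI1} places $t$ inside $\overset{\circ}{\mathscr{P}}\subset\mathbb{R}$. For simplicity I would differentiate through the composition, $\taul_{n,\theta}'(t)=T_n'(\Phi_\theta(t))\,\Phi_\theta'(t)$, and observe that both factors are nonzero: $T_n'(\zeta_k)\neq 0$ because the Chebyshev zeros are simple, while $\Phi_\theta'(t)\neq 0$ on $\overset{\circ}{\mathscr{P}}$ by \eqref{NoVan}.

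For part 2, I would rewrite $(\taul_{n,\theta}(t))^2=1$ as $T_n(\Phi_\theta(t))\in\{-1,+1\}\subset[-1,1]$. Lemma \ref{Prcchp} then forces $\Phi_\theta(t)\in T_n^{[-1]}([-1,1])=[-1,1]$, and a second application of Lemma \ref{Phip}, in the form $\Phi_\theta^{[-1]}([-1,1])=\mathscr{P}$, gives $t\in\mathscr{P}\subset\mathbb{R}$, which completes the argument.
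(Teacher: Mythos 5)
Your proposal is correct and follows essentially the same route as the paper: both arguments chase the zero set of $T_n$ (contained in $(-1,1)$) and the set $\{-1,+1\}\subset[-1,1]$ back through the composition $\taul_{n,\theta}=T_n\circ\Phi_\theta$ using the preimage identities of Lemma \ref{Phip}, and both obtain simplicity from the chain rule via the simplicity of the Chebyshev zeros and the nonvanishing of $\Phi_\theta'$ on $\overset{\circ}{\mathscr{P}}$. The only cosmetic difference is that for part 2 the paper invokes the prepackaged Lemma \ref{fupr} ($\taul_{n,\theta}^{[-1]}([-1,1])=\mathscr{P}$) where you unpack the same two steps explicitly.
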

\begin{proof} {\ }\\
\textbf{1.} For each \(n=1,2,3,\,\,\ldots\,\,\), \[T_n^{[-1]}(\{0\})\subset(-1,1).\]
Since \(\taul_{n,\theta}=T_n\circ\Phi_{\theta}\),
\[\taul_{n,\theta}^{[-1]}\big(\{0\}\big)\subset\Phi_\theta^{[-1]}\big((-1,1)\big).\]
In view of \eqref{PreI1},
\[\taul_{n,\theta}^{[-1]}\big(\{0\}\big)\subset\overset{\circ}{\mathscr{P}}.\]
In particular,
\[\taul_{n,\theta}^{[-1]}\big(\{0\}\big)\subset\mathbb{R}.\]
All roots of the Chebyshev polynomial are simple. Since
\(\Phi_\theta^{\prime}(t)\not=0\ \ \forall\,t\in\overset{\circ}{\mathscr{P}}\),
all roots of the trigonometric polynomial \(\taul_{n,\theta}=T_n\circ\Phi_{\theta}\) are simple as well.\\
\textbf{2.} The set of roots of the equation \eqref{rtp2} is the set
\[\taul_{n,\theta}^{[-1]}\big(\{-1\}\cup\{+1\}\big)
\subset\taul_{n,\theta}^{[-1]}\big([-1,1]\big)=\mathscr{P}\subset{\mathbb{R}}.\]
\end{proof}
We denote by \(\mathfrak{F}\) the class of all real entire functions \(f(z)\) having the
property (F): \emph{all roots of the equation \(f^2(z)-1=0\) are real.}

Functions of the class \(\mathfrak{F}\) arise in matters:
\begin{enumerate}
\item
Stability theory of linear differential equations
with periodic coefficients, \cite{K};
\item
Spectral theory of of linear differential equations
with periodic coefficients, \cite{MO}.
\item
Approximation theory, \cite{SY}, \cite{Y}.
\end{enumerate}
{\ }\\
Functions belonging to the class \(\mathfrak{F}\) admit a description in terms of
\emph{comb functions}. A comb functions is a function which effects a conformal mapping of the
open upper half-plane onto a comb region. See \cite{EY}.

The comb domain related to the function \(\taul_{n,\theta}(t)\) is shown in Figure 1,
where
\[\cosh h=\frac{1}{\cos 2\theta}.\]
\noindent
\hspace*{-1.5ex}
\begin{center}
{\includegraphics*[scale=0.7]{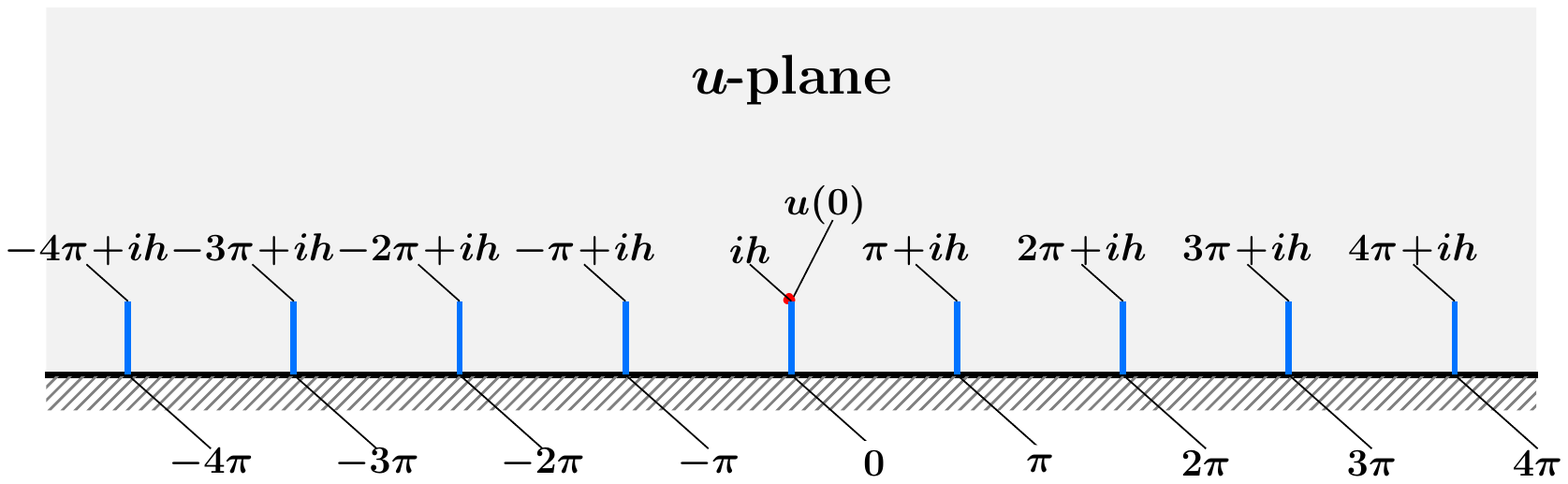}}\\
Figure 1
\end{center}
\normalsize
The function\footnote{Here \(\ln z=0\) for \(z=1\) and
\(\sqrt{z^2-1}>0\) for \(z\in(1,+\infty)\).
}
\[u_\theta(t)=i\ln\Big(\tfrac{\cos t}{\cos2\theta}+\sqrt{(\tfrac{\cos t}{\cos2\theta})^2-1}\,\Big)\]
effects the conformal mapping of the upper half-plane \(\{t\in\mathbb{C}:\,\im t>0\}\) onto the comb domain shown in Fig.1.
The normalizing conditions are:
\[u_\theta(0)=ih, \quad \lim\limits_{t\to i\infty} t^{-1}u_\theta(t)=1.\]
Figure 2 illustrates the boundary correspondence by the mapping \(t\to u_\theta(t)\).
\noindent
\hspace*{-1.5ex}
\begin{center}{\includegraphics*[scale=0.7]{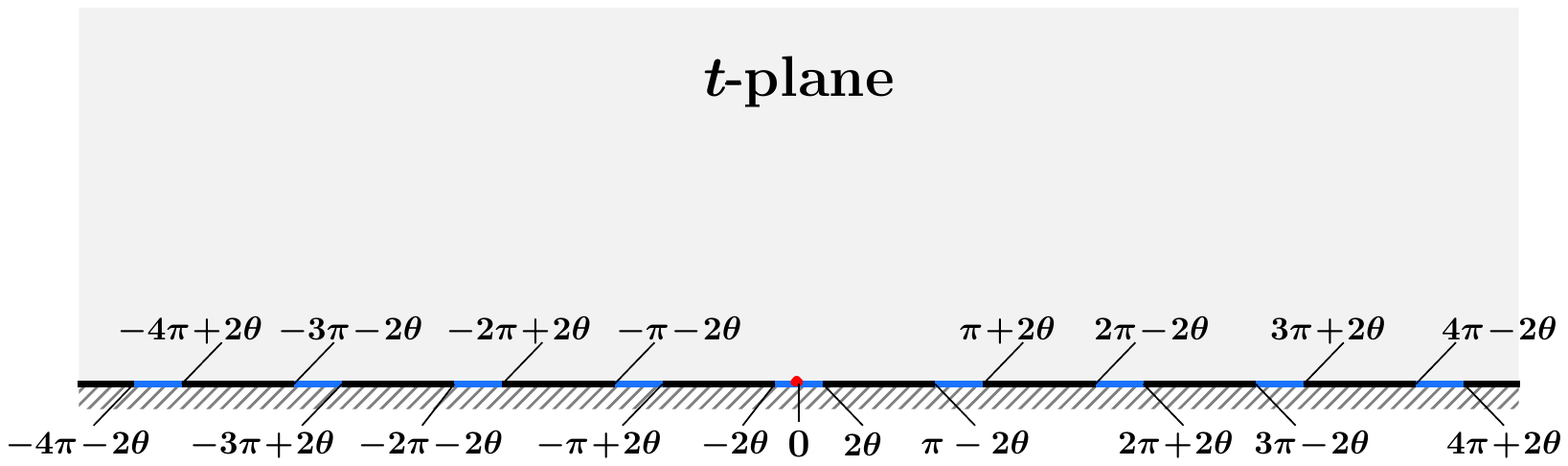}}\\
Figure 2
\end{center}
The function \(\taul_{n,\theta}(t)\) is representable in the form
\[\taul_{n,\theta}(t)=\cos n\,u_\theta(t).\]

% ------------------------------------------------------------------------
\end{document}